\theoremstyle{definition}
\newtheorem{definition}{Definition}[section]
\theoremstyle{plain}
\newtheorem{lemma}[definition]{Lemma}
\newtheorem{theorem}[definition]{Theorem}
\newtheorem{proposition}[definition]{Proposition}
\newcommand*{\bR}{\ensuremath{\mathbb{R}}}
\newcommand*{\loc}{\mathrm{loc}}
\newcommand*{\rk}{\mathrm{rank}\;\!\!}
\newcommand*{\bN}{\ensuremath{\mathbb{N}}}
\newcommand*{\spa}{\mathrm{span}}
\newcommand{\G}{\mathbb{G}}
\newcommand{\N}{\mathbb{N}}
\newcommand{\R}{\mathbb{R}}
\newcommand{\cH}{\mathcal{H}}
\newcommand{\cg}{\mathfrak{g}}
\newcommand{\res}{\mbox{\LARGE{$\llcorner$}}}
\newcommand{\der}{\partial}
\newcommand{\lls}{\big(}
\newcommand{\rrs}{\big)}
\newcommand{\ds}{\displaystyle}
\newcommand{\beqas}{\begin{eqnarray*}}
\newcommand{\eeqas}{\end{eqnarray*}}
\newcommand{\beqa}{\begin{eqnarray}}
\newcommand{\eeqa}{\end{eqnarray}}
\newcommand{\beq}{\begin{equation}}
\newcommand{\eeq}{\end{equation}}
\newcommand{\zr}{{(z)}}
\def\Xint#1{\mathchoice
   {\XXint\displaystyle\textstyle{#1}}%
   {\XXint\textstyle\scriptstyle{#1}}%
   {\XXint\scriptstyle\scriptscriptstyle{#1}}%
   {\XXint\scriptscriptstyle\scriptscriptstyle{#1}}%
   \!\int}
\def\XXint#1#2#3{{\setbox0=\hbox{$#1{#2#3}{\int}$}
     \vcenter{\hbox{$#2#3$}}\kern-.5\wd0}}
\newcommand{\meanint}{\Xint-}
\title[A Gromov's dimension comparison estimate]{A Gromov's dimension comparison estimate for rectifiable sets}
\author{Valentino Magnani}
\address{Valentino Magnani,  Dipartimento di Matematica \\
Largo Pontecorvo 5 \\ I-56127, Pisa}
\email{magnani@dm.unipi.it}
\author{Aleksandra Zapadinskaya}
\address{Aleksandra Zapadinskaya,  Dipartimento di Matematica \\
Largo Pontecorvo 5 \\ I-56127, Pisa}
\email{azapadinskaya@mail.dm.unipi.it}
\begin{document}

\thanks{The first author acknowledges the support of the European Project ERC AdG *GeMeThNES*.}
\thanks{The second author acknowledges the support of the Galileo Galilei grant, of the University of Pisa.}
\subjclass[2010]{Primary 28A75.}
\keywords{Sobolev mappings, sub-Riemannian distance, Hausdorff dimension, nilpotent Lie groups, exterior differentiation} 
\date{\today}

\begin{abstract}
We extend the validity of a Gromov's dimension comparison estimate for topological hypersurfaces to sufficiently large classes of rectifiable sets,
arising from Sobolev mappings. Our tools are a suitably weak exterior differentiation for pullback differential forms and a new low rank 
property for Sobolev mappings.
\end{abstract}

\maketitle

\tableofcontents

\section{Introduction}



The present work deals with the problem of finding the minimal regularity that still implies a certain ``transversality'' for a hypersurface with respect to a Lie bracket generating smooth distribution of linear subspaces. Clearly, the more regularity decreases, the more our ``surface'' can ``twist'', to become tangent to this distribution.
However, for very low regular surfaces tangency may have no meaning. 

This is precisely the case of a set $T$ of topological dimension $(q-1)$, contained in a Carnot-Carath\'eodory space of topological dimension $q$.
For this set, Gromov showed the following dimension comparison estimate 
\beq\label{GromovD}
\dim_HT\ge Q-1\,,
\eeq
where $Q$ is the Hausdorff dimension of the Carnot-Carath\'eodory space with respect to the sub-Riemannian distance, see 2.1 of \cite{gromov}, and
$\dim_H$ denotes the Hausdorff dimension with respect to this distance. In other words, the previous transversality problem is rephrased using the sub-Riemannian distance that arises from the smooth distribution. In the terminology of \cite{gromov}, this is the so-called {\em horizontal distribution}.

It is rather natural to ask which kind of regularity is necessary for a set $T$ in order to satisfy \eqref{GromovD}. 
For a smooth hypersurface the validity of this dimension comparison estimate follows from the transversality of the tangent space with respect to the
horizontal distribution, hence we will consider sets having tangent spaces, at least almost everywhere. 
This leads us to the general problem of testing the validity of \eqref{GromovD} for different classes of $(q-1)$-rectifiable sets, \cite{Federer69}.

First answers to this issue are contained in the work of Balogh and Tyson, who constructed a horizontal fractal in the first Heisenberg group, whose 2-dimensional Hausdorff measure with respect to the sub-Riemannian distance is finite and positive. This fractal also contains the graph of a BV function, \cite{BHT}, \cite{BT}. As a consequence, the Heisenberg group, of Hausdorff dimension $Q=4$, has a 2-rectifiable set $S$ contained in the graph of this BV function, such that 
\beq\label{dimEH}
0<\cH_d^2(S)<+\infty \quad \mbox{and }\quad \dim_H(S)= 2\,
\eeq
where $\cH^2_d$ is the Hausdorff measure with respect to the sub-Riemannian distance $d$. The estimates \eqref{dimEH} also imply that the ``approximate'' tangent space of $S$ is tangent to the horizontal distribution of the Heisenberg group, see Theorem~6.2 of \cite{MMM}. Since this group is also a Carnot-Carath\'eodory space, conditions \eqref{dimEH} show that Gromov's dimension comparison estimate \eqref{GromovD} cannot extend to all possible $(q-1)$-rectifiable sets. On the other hand, in all Heisenberg groups, each one codimensional rectifiable set of $W^{1,1}$ Sobolev regularity satisfies \eqref{GromovD}. In fact, a more general result can be proved for this group, see \cite{BHW}, \cite{MMM}.

In the present work, we consider $(q-1)$-rectifiable sets in homogeneous stratified groups and we show that under a suitable Sobolev regularity,
they must satisfy the Gromov's dimension comparison estimate \eqref{GromovD}.
\begin{theorem}[Dimension comparison for Sobolev rectifiable sets]\label{HDim}
Let $\Omega\subset\R^{q-1}$ be an open set, let $f\in W_\loc^{1,p}(\Omega,\R^q)$
be such that a.e. in $\Omega$ the rank of its differential is $q-1$.
We equip $\R^q$ with the structure of homogeneous stratified group, with homogeneous distance $d$.
We assume that $p>q-m_1$ if $m_1<q-1$, or $p=1$ if $m_1=q-1$.
If we set $\Sigma=f(\Omega)$ and $Q$ to be the Hausdorff dimension of the group with respect to $d$, then we have $\cH_d^{Q-1}(\Sigma)>0$.
In particular, we have $\dim_H\Sigma\ge Q-1$.
\end{theorem}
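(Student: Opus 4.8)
The plan is to carry out Gromov's scheme of pulling back left-invariant differential forms, but to perform the pullback and its exterior differentiation in the weak sense adapted to Sobolev maps, and then to split the statement into two facts: a weighted area estimate bounding pullbacks of $(q-1)$-forms by $\cH^{Q-1}_d$ of the image, and a low rank property asserting that tangency to the (non-integrable, because bracket generating) horizontal distribution is incompatible with $\rk df=q-1$ at the assumed regularity.

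Write $\R^q=V_1\oplus\cdots\oplus V_s$ for the stratification, $m_j=\dim V_j$, and fix a basis $\theta^1,\dots,\theta^q$ of left-invariant $1$-forms adapted to it, so that $\theta^1,\dots,\theta^{m_1}$ span the horizontal cotangent space, each $\theta^i$ carries a weight $w(\theta^i)\in\{1,\dots,s\}$, and $\sum_i w(\theta^i)=Q$; in the associated exponential coordinates $\theta^j=dx^j$ for $j\le m_1$. For $i=1,\dots,q$ set $\omega_i:=\theta^1\wedge\cdots\wedge\widehat{\theta^i}\wedge\cdots\wedge\theta^q$, a $(q-1)$-form of weight $Q-w(\theta^i)$; the forms of maximal weight $Q-1$ are exactly $\omega_1,\dots,\omega_{m_1}$. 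An elementary computation with decomposable covectors gives, at any point $x$ of approximate differentiability with $\rk df_x=q-1$: one has $f^*\omega_i(x)=0$ for all $i\le m_1$ if and only if the horizontal fibre $\Delta_{f(x)}$ is contained in $\im df_x$, i.e.\ $x$ is a characteristic point of $f$. The first step is the estimate
\[
\int_\Omega |f^*\omega_i|\,dx\ \le\ C\,\cH^{Q-1}_d(\Sigma),\qquad i\le m_1,
\]
whose point is that each $\omega_i$ is left-invariant of weight equal to the dimension $Q-1$ of $\cH^{Q-1}_d$, so that its comass is controlled at every scale by the intrinsic dilations. To give $f^*\omega_i$ meaning at our low exponent one rewrites it, peeling the $m_1-1$ closed (indeed exact) horizontal factors $\theta^j=dx^j$ and repeatedly using $df^a\wedge df^b=d(f^a\,df^b)$ together with the structure equations below, as a finite sum of exact terms and terms each of which involves at most $q-m_1$ factors of $Df$; these lie in $L^1_\loc$ precisely when $p>q-m_1$ (and the endpoint $p=1$ covers $m_1=q-1$, where a single weight $2$ layer is present), which is what forces the hypothesis on $p$. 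With $f^*\omega_i$ so defined, the displayed estimate follows from the area formula for $f$ and the comass bound; hence, if $\cH^{Q-1}_d(\Sigma)=0$, then $f^*\omega_i=0$ a.e.\ for every $i\le m_1$, i.e.\ \emph{a.e.\ point of $\Omega$ is characteristic}.

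It remains to show that a.e.\ characteristicity contradicts $\rk df=q-1$ a.e.; this is the low rank property, and it is where the weak exterior differentiation of pullback forms is used in earnest. Writing the adapted horizontal fields as $X_i=\der_i+\sum_{l>m_1}a^i_l\,\der_l$ with $a^i_l$ polynomial, the $(q-1)$-minors $J_1,\dots,J_q$ of $Df$ (the components of the Euclidean conormal of $\Sigma$) satisfy on the characteristic set the algebraic relations $J_i=\sum_{l>m_1}\pm(a^i_l\circ f)\,J_l$ for $i\le m_1$, so the horizontal minors are slaved to the vertical ones and it suffices to force $J_l=0$ a.e.\ for $l>m_1$. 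The structure equations $d\theta^j=\sum_{w(\theta^a)+w(\theta^b)=w(\theta^j)}c^j_{ab}\,\theta^a\wedge\theta^b$ are nontrivial exactly for the vertical layers; pulling back closed forms built from the $\theta^j$ produces, via the weak exterior differentiation, Jacobian (null Lagrangian) identities among the $J_l$ on $\Omega$, and inserting the algebraic relations above and iterating down the tower $V_s,V_{s-1},\dots,V_2$ propagates the vanishing, yielding $J_l=0$ a.e.\ for all $l$, hence $\rk df\le q-2$ a.e., a contradiction.

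Combining the two parts: $\cH^{Q-1}_d(\Sigma)=0$ forces a.e.\ characteristicity, which forces $\rk df\le q-2$ a.e., against the hypothesis; therefore $\cH^{Q-1}_d(\Sigma)>0$, and in particular $\dim_H\Sigma\ge Q-1$. I expect the main obstacle to be the low rank property — specifically, running the propagation of degeneracy down the stratification through an exterior calculus that is only just valid at $p>q-m_1$, and handling the borderline case $p=1$ — whereas the weighted area estimate, though technical for non-$C^1$ maps, is comparatively routine once the comass of left-invariant forms of weight $Q-1$ is under control.
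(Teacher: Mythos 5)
Your overall architecture matches the paper's: the theorem splits into (a) a measure-theoretic step showing that if the image of $df$ fails to contain the horizontal fibre on a set of positive measure then $\cH^{Q-1}_d(\Sigma)>0$ (this is the paper's ``standard argument'' imported from \cite{MMM}, and your weighted area estimate is its contrapositive), and (b) a low rank property showing that a.e.\ horizontality is incompatible with $\rk df=q-1$ a.e. Almost all of the paper's content lies in (b) (Theorem~\ref{main}), and this is exactly where your proposal has a genuine gap. You propose to extract ``Jacobian identities among the vertical minors $J_l$'' from the structure equations and to ``iterate down the tower $V_s,\dots,V_2$'' until $J_l=0$ for all $l>m_1$. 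As described, each layer of that iteration would need a fresh exterior differentiation of a pullback form whose coefficients are not locally summable, whereas the regularity $p>q-m_1$ licenses exactly one differentiation of a $(q-m_1)$-form (Theorem~\ref{th1} requires $p>k$ for $k$-forms); repeated differentiation is precisely what the paper cannot and does not perform. The paper's mechanism runs in the opposite direction: assuming $\rk df=q-1$ a.e.\ and $(H_1)_{f(y)},\dots,(H_\kappa)_{f(y)}\subset df_y(\R^{q-1}_y)$, it re-encodes this inclusion as the vanishing of the \emph{single} pullback form $f^*(\eta_{m_\kappa+1}\wedge\cdots\wedge\eta_q)$ of degree $q-m_\kappa\le q-m_1$, differentiates once, wedges with an auxiliary form $\theta_s$ built from the coefficients expressing $X_s\in[V_1,V_\kappa]$ so as to isolate $f^*(\eta_1\wedge\cdots\wedge\widehat{\eta}_s\wedge\cdots\wedge\eta_q)=0$ for $d_s=\kappa+1$, and then invokes the algebraic Lemma~\ref{algebra} together with the maximal rank hypothesis to conclude $(H_{\kappa+1})_{f(y)}\subset df_y(\R^{q-1}_y)$; after $\iota-1$ such stages all of $\R^q$ would sit inside a $(q-1)$-dimensional image, a contradiction. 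Note also that for $\iota>2$ the paper does not show all minors vanish a.e.\ (your claimed conclusion $\rk df\le q-2$ a.e.\ is obtained only for step two); it only contradicts maximality of the rank on a set of positive measure, which suffices.

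A secondary problem sits in your step (a): for $i\le m_1$ the form $f^*\omega_i$ carries $q-1$ factors of $Df$, since each vertical factor $f^*\theta^l$ with $l>m_1$ still contributes a full first derivative of $f$; hence $f^*\omega_i$ need not be in $L^1_\loc$ when $p>q-m_1$ only. Your ``peeling'' of the exact horizontal factors converts $\int_\Omega f^*\omega_i$ into a distributional (current-type) object, and the pointwise conclusion ``$f^*\omega_i=0$ a.e.'' does not follow from the vanishing of such an object. The paper sidesteps this entirely by never integrating $f^*\omega_i$ over $\Omega$: the standard argument works on the non-characteristic set, restricted to a subset where the approximate differential is controlled, via density and area-formula estimates.
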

In this theorem, $m_1$ denotes the dimension of all horizontal fibers \eqref{eq:Hj}. Section~\ref{HypSect} provides more details on the standard identification 
of a stratified group with $\R^q$, through a graded basis.
We notice that Theorem~\ref{HDim} holds with the minimal Sobolev regularity $W^{1,1}_\loc$, whenever the horizontal distribution has codimension one.

In the Gromov's proof of \eqref{GromovD}, the fact that the set of topological dimension $q-1$ separates the space locally into two parts
plays a key role. This may be interpreted as the fact that images of Sobolev mappings satisfying the assumptions of Theorem~\ref{HDim} have somehow
a ``local separating property''. 

The proof of Theorem~\ref{HDim} relies on two independent results. The first one is a weak exterior differentiation for pullback Sobolev differential forms.

\begin{theorem}[Exterior differentiation]\label{th1}
Let $k,n,m$ be three positive integers such that $k<n$ and $k\le m$. Assume that one of the following conditions holds:
$p>k$ if $k>1$, or $p=1$ if $k=1$.
Let $\Omega\subset\bR^n$ be an open set, $f\in W^{1,p}_\loc(\Omega;\bR^m)$ and $\eta$ be a continuously differentiable $k$-form in $\bR^m$.
Then the condition $f^*\eta=0$ almost everywhere in $\Omega$ implies that $f^*(d\eta)=0$ almost everywhere in $\Omega$.
\end{theorem}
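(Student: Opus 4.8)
The plan is to reduce to compactly supported $\eta$, to dispose of the range $p\ge k+1$ by a routine mollification argument, and then to attack the sharp range by combining a pointwise observation on the rank of $Df$ with the structure of $W^{1,p}$ maps. For the reductions: if $m=k$ then $d\eta$ is an $(m+1)$-form on $\bR^m$, hence identically zero, so assume $m\ge k+1$. Next one reduces to $\eta\in C^1_c(\bR^m)$: for $\chi_R\in C^\infty_c(\bR^m)$ with $\chi_R\equiv1$ on $B_R$ one has $f^*(\chi_R\eta)=(\chi_R\circ f)\,f^*\eta=0$ a.e., while on $\{x:f(x)\in B_{R-1}\}$, where $f^*(d\chi_R)=0$ and $\chi_R\circ f=1$, one has $f^*\big(d(\chi_R\eta)\big)=f^*(d\eta)$; since these sets exhaust $\Omega$ up to a null set as $R\to\infty$, it suffices to treat each $\chi_R\eta$, so henceforth $\eta$, $d\eta$ and all first partials of the coefficients of $\eta$ are bounded.

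Now mollify: let $f_\varepsilon=f*\rho_\varepsilon\to f$ in $W^{1,p}_\loc$, passing to a subsequence with $f_\varepsilon\to f$ and $Df_\varepsilon\to Df$ a.e. The coefficients of $f^*_\varepsilon\eta$ are homogeneous of degree $k$ in $Df_\varepsilon$ with bounded, a.e.\ convergent scalar factors $\eta_I\circ f_\varepsilon$, so the usual product estimate gives $f^*_\varepsilon\eta\to f^*\eta=0$ in $L^{p/k}_\loc\subset L^1_\loc$; hence $d(f^*_\varepsilon\eta)\to0$ in $\mathcal{D}'(\Omega)$, and since $f_\varepsilon$ is smooth $d(f^*_\varepsilon\eta)=f^*_\varepsilon(d\eta)$, so that
\[
f^*_\varepsilon(d\eta)\longrightarrow 0\qquad\text{in }\mathcal{D}'(\Omega).
\]
If $p\ge k+1$, the coefficients of $f^*(d\eta)$, being homogeneous of degree $k+1$ in $Df$, lie in $L^{p/(k+1)}_\loc\subset L^1_\loc$, and the same product estimate yields $f^*_\varepsilon(d\eta)\to f^*(d\eta)$ in $L^1_\loc$; comparing with the display forces $f^*(d\eta)=0$ a.e. This settles the theorem whenever $p\ge k+1$.

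There remains the sharp range $k<p<k+1$ (and $k=p=1$), where the coefficients of $f^*(d\eta)$ need not be locally integrable and the limit above cannot be passed directly. At every approximate differentiability point one has $f^*(d\eta)(x)=\big(Df(x)\big)^*\big((d\eta)_{f(x)}\big)$, and the pullback of any $(k+1)$-form under a linear map of rank $\le k$ vanishes; hence $f^*(d\eta)=0$ a.e.\ on $\{\rk Df\le k\}$, and it suffices to prove $f^*(d\eta)=0$ a.e.\ on $\{\rk Df\ge k+1\}$. Restricting $f$ to $(k+1)$-dimensional coordinate planes and applying Fubini (to the hypothesis $f^*\eta=0$, and in reverse to the conclusion) reduces matters to $n=k+1$, where $\{\rk Df\ge k+1\}=\{\rk Df=n\}$. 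On this set I would invoke a $C^1$-Lusin approximation $g$ of $f$, with $\big|\{f\ne g\}\cup\{Df\ne Dg\}\big|$ arbitrarily small; since $f^*\eta=g^*\eta$ and $f^*(d\eta)=g^*(d\eta)$ a.e.\ on $\{f=g,\ Df=Dg\}$, it is enough to show $g^*(d\eta)=0$ a.e.\ on $\{\rk Dg=n\}$ given $g^*\eta=0$ a.e.\ there. On the open set $\{\rk Dg=n\}$ the $C^1$ map $g$ is an immersion, hence locally a $C^1$ embedding onto a $C^1$ submanifold $M\subset\bR^m$, and a change of variables by this embedding reduces everything to the statement: if a $C^1$ $k$-form $\eta$ on $\bR^m$ restricts to the zero form $\mathcal{H}^{k+1}$-a.e.\ on a positive-measure subset $E$ of a $C^1$ submanifold $M$ of dimension $k+1$, then $(d\eta)|_M=0$ $\mathcal{H}^{k+1}$-a.e.\ on $E$.

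This last density-point statement is the step I expect to be the main obstacle. A direct attack — testing $(d\eta)|_M=d(\eta|_M)$ against rescaled bump forms near a density point $x_0$ of $E$ and integrating by parts onto $\eta|_M$ — yields only an $o(r^{n-1})$ bound, one power of $r$ short of what is needed to conclude $(d\eta)|_M(x_0)=0$. Overcoming this should require genuinely using that $\eta$ extends to a $C^1$ form on all of $\bR^m$: in a $C^1$ parametrization $\psi$ of $M$ one has $\psi^*\eta=\sum_I h_I\,d\psi^I$ and $\psi^*(d\eta)=\sum_I dh_I\wedge d\psi^I$ with the scalar factors $h_I:=\eta_I\circ\psi$ of class $C^1$ (even though $\psi^*\eta$ itself is only continuous), and one propagates the vanishing of $\psi^*\eta$ to the vanishing of the relevant derivatives — and hence of $\psi^*(d\eta)$ — a.e.\ on $E$ via a slicing/Fubini reduction to the one-dimensional fact that a $C^1$ function vanishing on a set of full density there has vanishing derivative. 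Carrying out this measure-theoretic bookkeeping across the rank stratification of $Df$, with only $C^1$ (not $C^2$) control on the approximation $g$, is the delicate core; the contributions of $\{\rk Df\le k\}$ and of the range $p\ge k+1$ are, by contrast, routine.
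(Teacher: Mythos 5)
Your preliminary reductions are sound as far as they go: killing the case $m=k$, cutting $\eta$ off, disposing of $p\ge k+1$ by mollification, discarding $\{\rk Df\le k\}$, and slicing down to $n=k+1$ (this last step is essentially the paper's Section~\ref{Sect:Slice}). The fatal problem is the core step: the $C^1$-Lusin approximation followed by the claim that a $C^1$ $k$-form $\eta$ on $\bR^m$ whose restriction to a $(k+1)$-dimensional $C^1$ submanifold $M$ vanishes $\cH^{k+1}$-a.e.\ on a positive-measure subset $E\subset M$ must satisfy $(d\eta)|_M=0$ a.e.\ on $E$. That claim is false. Take $k=1$, $m=3$ and the smooth form $\eta=dx_3-x_1\,dx_2$, so $d\eta=-dx_1\wedge dx_2$. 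By Alberti's Lusin-type theorem for gradients (J.~Funct.~Anal.~100 (1991)), for every $\varepsilon>0$ there is $u\in C^1$ on $(0,1)^2$ with $\nabla u=(0,x_1)$ outside a set of measure less than $\varepsilon$. Let $M$ be the graph of $u$ and $E$ the part of the graph lying over $E'=\{\nabla u=(0,x_1)\}$. Over $E'$ the tangent plane to $M$ is spanned by $(1,0,0)$ and $(0,1,x_1)$, on both of which $\eta$ vanishes; hence $\eta|_{TM}=0$ on $E$ and $\cH^{2}(E)>0$, while $d\eta|_{TM}=-dx_1\wedge dx_2|_{TM}$ is nowhere zero on $M$. (This is exactly the mechanism producing $C^1$ surfaces in the Heisenberg group whose characteristic set has positive measure; it does not contradict Theorem~\ref{main}, whose horizontality hypothesis is required to hold \emph{a.e.\ in $\Omega$}, not merely on a set of positive measure.)

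The counterexample identifies precisely what your localization discards. The hypothesis of Theorem~\ref{th1} is that $f^*\eta=0$ almost everywhere in the \emph{open} set $\Omega$; once you restrict to the coincidence set of a Lusin approximation (or to any non-open set of positive measure), the small exceptional set is exactly where the twisting that generates a nonzero $f^*(d\eta)$ can hide, and no density-point argument can recover it. Your proposed repair via the one-dimensional fact about $C^1$ functions also cannot be carried out: the coefficients of $\psi^*\eta$ are sums $\sum_I h_I\,\partial\psi^I/\partial x_J$ with $h_I\in C^1$ but with the minors of $D\psi$ only continuous, so these coefficients are not differentiable and the $h_I$ cannot be isolated. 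The theorem is genuinely nonlocal in $\Omega$, and the paper's proof exploits this: it blows up $f$ at an $L^p$-differentiability point $z$, uses the hypothesis on \emph{almost every sphere} $\partial B(z,r)$ --- where $p>n-1$ yields a Sobolev imbedding into $C^0$ and permits oriented integration of the (possibly non-integrable) pullback forms --- passes to the limit of these sphere integrals towards those of the linear map $df_z$, and concludes with Stokes' theorem on the ball. Your treatment of $p\ge k+1$ and of the low-rank set can be retained, but the remaining range of exponents requires an integral argument of this kind rather than a pointwise one.
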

The proof of this result develops the blow-up arguments used in \cite{MMM}, with some additional difficulties. 
The important case is when $k=n-1$, where a new tool to remark is the Sobolev imbedding theorem on $(n-1)$-spheres.
This allows us to find a suitable blow-up sequence of the mapping $f$ and to perform the oriented integration of the rescaled mapping to pass to the limit. In fact, under suitable Sobolev regularity, we can introduce well defined oriented integrals of Sobolev differential forms on spheres, see Section~\ref{OInt}.
It is important to stress that in Theorem~\ref{th1} the coefficients of the Sobolev differential form $f^*\eta$ may not be even locally summable, hence its distributional exterior differentiation would not be possible. The case $k<n-1$ is a standard consequence of the previous case, as explained in Section~\ref{Sect:Slice}.

The second result for the proof of Theorem~\ref{HDim} is more delicate and it represents the main novelty of this work.
Let us start from our Sobolev rectifiable set, given by the image of a Sobolev mapping $f\in W^{1,p}(\Omega,\R^q)$ whose approximate differential $df$ has a.e.\ maximal rank, where $\Omega\subset\R^{q-1}$ is an open set. Notice that throughout our work we will not use a special notation to denote the approximate differential.
From standard arguments, one can check that the dimensional estimate \eqref{GromovD} is satisfied as soon as the image of the approximate differential of $f$ does not contain the horizontal fiber on a set of positive measure.
Thus, our starting point is to assume, by contradiction, that this image, representing the approximate tangent space, contains a.e. the horizontal fiber.
Since we wish to prove that this horizontality implies that the rank of $Df$ cannot be maximal, our horizontality assumption is formulated by two possible conditions, either
\begin{equation}\label{eq:hor12}
(H_1)_{f(y)}\subset df_y(\bR^{q-1}_y) \quad \mbox{or}\quad  df_y(\bR^{q-1}_y)\subset (H_1)_{f(y)},
\end{equation}
depending on whether the horizontal distribution of the group has codimension one.
We have denoted by $(H_1)_x$ the horizontal fiber defined in \eqref{eq:Hj}. 
This first horizontality condition of \eqref{eq:hor12} presents new difficulties with respect to the case of  the Heisenberg group, that is included in the second one.
In fact, letting $\eta_1,\ldots,\eta_q$ be the dual basis of the left invariant forms, with respect to a basis of the Lie algebra, the first condition of \eqref{eq:hor12}
does not imply any vanishing of the single pullback form 
\[
f^*\eta_i
\]
with $i>m_1$, where $m_1$ is the dimension of the horizontal fibers. For this reason, we have to use higher dimensional differential forms, getting the following vanishing condition
\[
f^*(\eta_{m_1+1}\wedge\cdots\wedge\eta_q)=0.
\]
In view of Theorem~\ref{th1}, our Sobolev regularity allows us to differentiate the previous equality, that holds almost everywhere, hence obtaining new vanishing conditions, that in turn imply that also the second layer $(H_2)_{f(y)}$ is contained in the image of $df_y(\R^{n-1})$ for a.e.\ $y\in\Omega$. For groups of step two, this leads to a contradiction.
For higher step groups, our Sobolev regularity does not allow us to perform further differentiations. Here a tricky argument by induction over the number of strata $(H_j)_{f(y)}$ contained in the approximate tangent space at $f(y)$ overcomes this problem, hence performing exterior differentiation only once.
The previous arguments are contained in the proof of Theorem~\ref{main}, that can be also seen as a new low rank property,
in the terminology of \cite{MMM}. We also point out that this theorem contains a stronger result for two step stratified groups,
that is the rank of $df$ is not maximal almost everywhere. On the other hand, in any stratified group Theorem~\ref{main} immediately implies the following result.

\begin{theorem}\label{main-I}
Let $\R^q$ be equipped with the structure of homogeneous stratified group and let $m_1$ be the dimension of horizontal fibers.
Fix  $p>q-m_1$ if $m_1<q-1$ or $p=1$ if $q=m_1+1$. 
Suppose that $f\in W_\loc^{1,p}(\Omega,\R^q)$, where $\Omega\subset\bR^{q-1}$ is open and the rank of $df$ is $q-1$ a.e.\ in $\Omega$. Then there exists a set of positive measure $A\subset\Omega$, such that $(H_1)_{f(y)}\nsubseteq df_y(\bR^{q-1}_y)$ for every $y\in A$.
\end{theorem}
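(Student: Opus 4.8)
The plan is to argue by contradiction, encoding the failure of the conclusion as a vanishing condition for a pullback differential form and then differentiating that form by means of Theorem~\ref{th1}. Suppose the claim is false, so that $A=\{y\in\Omega:\ (H_1)_{f(y)}\not\subseteq df_y(\bR^{q-1}_y)\}$ is Lebesgue negligible; then for a.e.\ $y\in\Omega$ one has $\rk df_y=q-1$ and $(H_1)_{f(y)}\subseteq df_y(\bR^{q-1}_y)$. We may assume $m_1<q$, since for the abelian group $\bR^q$ one has $(H_1)_{f(y)}=\bR^q\not\subseteq df_y(\bR^{q-1}_y)$ and $A=\Omega$; consequently $m_1\ge2$, because a one dimensional first layer generates only a one dimensional group. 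Let $\eta_1,\dots,\eta_q$ be the left invariant coframe dual to a graded basis of left invariant vector fields, so that the forms $\eta_{m_1+1},\dots,\eta_q$ vanish on the horizontal layer at each point, and set $\omega=\eta_{m_1+1}\wedge\cdots\wedge\eta_q$, a $(q-m_1)$-form with polynomial, in particular $C^1$, coefficients. Since these forms vanish on $H_1$, the value of $\omega$ on $q-m_1$ vectors depends only on their classes modulo $(H_1)_x$; and for a.e.\ $y$ the subspace $df_y(\bR^{q-1}_y)$ contains $(H_1)_{f(y)}$ and has dimension $q-1$, so it projects onto a subspace of $\bR^q/(H_1)_{f(y)}$ of dimension at most $q-1-m_1<q-m_1$. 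Hence any $q-m_1$ vectors of that image are linearly dependent modulo $H_1$ and $\omega$ vanishes on them, i.e., $f^{*}\omega=0$ a.e.\ in $\Omega$.

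Next I would apply Theorem~\ref{th1} with $n=q-1$, $m=q$ and $k=q-m_1$, whose hypotheses hold precisely under the present assumptions: since $2\le m_1<q$ one has $1\le k=q-m_1\le q-2<n$ and $k\le q=m$, while the threshold in Theorem~\ref{th1}, that $p>k$ when $k>1$ and $p=1$ when $k=1$, becomes exactly the hypothesis that $p>q-m_1$ when $m_1<q-1$ and $p=1$ when $m_1=q-1$. Therefore $f^{*}\omega=0$ a.e.\ forces $f^{*}(d\omega)=0$ a.e.\ in $\Omega$. To use this I would expand $d\omega$ via the Maurer--Cartan equations $d\eta_i=-\tfrac12\sum_{a,b}c^{i}_{ab}\,\eta_a\wedge\eta_b$, recalling that in a stratified group $d\eta_i=0$ for $i\le m_1$ and that for $i>m_1$ the constant $c^{i}_{ab}$ is nonzero only when the layer of $i$ equals the sum of the layers of $a$ and $b$. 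Applying the Leibniz rule and discarding every term in which some $\eta_j$ occurs twice, one finds that only the layer two indices survive, so that $d\omega$ equals, up to signs, a sum over the layer two indices $i$ of $\bigl(\eta_{m_1+1}\wedge\cdots\wedge\widehat{\eta_i}\wedge\cdots\wedge\eta_q\bigr)\wedge d\eta_i$, with $d\eta_i$ a horizontal $2$-form built from the layer two structure constants.

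Evaluating the identity $f^{*}(d\omega)=0$ on well chosen vectors of $df_y(\bR^{q-1}_y)$, with horizontal ones placed in the $2$-form factor and complementary ones in the remaining factors, one extracts that the second layer $(H_2)_{f(y)}$ is also contained in $df_y(\bR^{q-1}_y)$ for a.e.\ $y$. For a stratified group of step two this already closes the argument: the two layers exhaust $\bR^q$, so $df_y(\bR^{q-1}_y)$ would contain all of $\bR^q$, forcing $\rk df_y=q$ against $\rk df_y=q-1$. For higher step the regularity $p>q-m_1$ is only what Theorem~\ref{th1} requires for the single differentiation of the $(q-m_1)$-form $\omega$, not for a further differentiation of the $(q-m_1+1)$-form $d\omega$; instead I would run a tricky induction on the number $j$ of consecutive layers $(H_1)_{f(y)},\dots,(H_j)_{f(y)}$ contained in $df_y(\bR^{q-1}_y)$, performing the exterior differentiation only once and, at each inductive step, combining the resulting identities with the structure equations to push the next layer $(H_{j+1})_{f(y)}$ into the image. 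Since the layers span $\bR^q$, the induction cannot reach the last layer without making $df_y(\bR^{q-1}_y)=\bR^q$ a.e., contradicting $\rk df_y=q-1$; hence $A$ has positive measure.

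The step I expect to be the main obstacle is this last induction. After the single exterior differentiation one must identify which combinations of structure constants are available, evaluate the resulting pullback identities on vectors chosen in general position inside the image so as to isolate the contribution of the layer one is after, and exploit the non-degeneracy coming from the bracket generating property of the first layer, the whole scheme being arranged so that it genuinely closes without any further differentiation. Making this linear-algebraic passage rigorous, from an identity between pullbacks of higher degree forms to a true inclusion of subspaces, is the delicate core of the proof.
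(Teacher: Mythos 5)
Your overall architecture is the same as the paper's: the paper obtains Theorem~\ref{main-I} as an immediate corollary of Theorem~\ref{main} (observing that when $m_1=q-1$ the full--rank hypothesis makes the inclusion $(H_1)_{f(y)}\subset df_y(\bR^{q-1}_y)$ equivalent to $df_y(\bR^{q-1}_y)\subset (H_1)_{f(y)}$), so what you are really doing is reproving Theorem~\ref{main}. Your first half is correct and matches the paper: the reduction of the contradiction hypothesis to $f^*(\eta_{m_1+1}\wedge\cdots\wedge\eta_q)=0$ a.e., the verification that $k=q-m_1$, $n=q-1$, $m=q$ meet the hypotheses of Theorem~\ref{th1} (including the remark that $m_1\ge 2$ for a nonabelian stratified group, which is what gives $k<n$), and the identification of which structure constants survive in $d\omega$.

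The gap is exactly the part you flag as ``the delicate core,'' and it is genuine on two counts. First, the passage from $f^*(d\omega)=0$ to $(H_2)_{f(y)}\subset df_y(\bR^{q-1}_y)$ is not a routine evaluation on well chosen vectors: the paper multiplies $f^*(d\omega)$ by the pullback of an auxiliary $(m_1-2)$-form $\theta_s$, built from the coefficients $\gamma^s_{k,l}$ expressing $X_s\in V_2$ as a combination of brackets $[X_k,X_l]$, so as to isolate $f^*(\eta_1\wedge\cdots\wedge\widehat{\eta}_s\wedge\cdots\wedge\eta_q)=0$ for each $s$ of degree $2$, and then invokes Lemma~\ref{algebra} --- which uses the hypothesis $\rk df=q-1$ in an essential way --- to convert the vanishing of these $(q-1)$-forms on the $(q-1)$-dimensional image into $X_s(f(y))\in df_y(\bR^{q-1}_y)$. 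Second, and more seriously, your plan for step $\iota\ge 3$, ``performing the exterior differentiation only once'' and then pushing the higher layers in by linear algebra alone, cannot work as stated: by your own Leibniz computation, $d\omega$ for $\omega=\eta_{m_1+1}\wedge\cdots\wedge\eta_q$ involves only the structure constants $c^r_{kl}$ with $d_r=2$, so the single identity $f^*(d\omega)=0$ carries no information about $V_3,\ldots,V_\iota$. The paper's induction re-enters the differentiation loop at every stage: once $(H_1)_{f(y)},\ldots,(H_\kappa)_{f(y)}\subset df_y(\bR^{q-1}_y)$ is known, one deduces the new vanishing $f^*(\eta_{m_\kappa+1}\wedge\cdots\wedge\eta_q)=0$ and applies Theorem~\ref{th1} to this new form; the key point you are missing is that its degree $q-m_\kappa$ is at most $q-m_1$, so the single threshold $p>q-m_1$ licenses all $\iota-1$ of these differentiations. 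What the regularity forbids is differentiating the derived $(q-1)$-forms or iterating $d$ on $d\omega$, not differentiating a fresh, lower-degree form at each layer. Without this observation your induction does not close.
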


This is also a consequence of the fact that in the case $m_1=q-1$, the hypothesis that $df$ has rank $q-1$ a.e.\ makes the inclusion $df_y(\R^{q-1}_y)\subset (H_1)_{f(y)}$ equivalent to the opposite inclusion $(H_1)_{f(y)}\subset df_y(\R^{q-1}_y)$. Theorem~\ref{main-I} joined with standard arguments, see for instance the proof of Theorem~1.2 in \cite{MMM}, immediately lead us to Theorem~\ref{HDim}.

\section{Oriented integral of Sobolev differential forms on spheres}\label{OInt}

Let $k,n,m$ be three positive integers such that $k\le\min\{m,n\}$. We denote by $I_{k,m}$ the set of all ordered collections $(i_1,\ldots,i_k)\in\N^k$, such that
$ 1\leq i_1<\ldots<i_k\leq m$.
The standard basis of elementary $k$-forms in $\R^m$ is given by the elements
\[
dx_J=dx_{i_1}\wedge\ldots\wedge dx_{i_k}
\]
where $J=(i_1,\ldots,i_k)$ varies on $I_{k,m}$. Let $\Omega\subset\R^n$ be an open set and assume that $f:\Omega\to\R^m$ has approximate differential $df_y$ at $y\in\Omega$ and $J=(i_1,\ldots,i_k)\in I_{k,m}$. Then we define
\[
(f^*dx_J)_y=\left(df_J\right)_y=\left(df_{i_1}\right)_y\wedge\ldots\wedge \left(df_{i_k}\right)_y,
\]
so that whenever $f$ has a.e.\ approximate differential this definition sets a $k$-form defined a.e.\ in $\Omega$, that we denote by $df_J$.
It is important to stress that even in the case $f$ is in some Sobolev space, the elementary $k$-form $df_J$ need not be even locally summable.

Minors are denoted as follows. If $J=(i_1,\ldots,i_k)\in I_{k,m}$ and $I=(j_1,\ldots,j_k)\in I_{k,n}$, then we set
\begin{equation*}
\frac{\partial f_{J}}{\partial x_{I}}(y)=\frac{\partial(f_{i_1},\ldots,f_{i_k})}{\partial(x_{j_1},\ldots,x_{j_k})}(y)=
\det\left[\frac{\partial f_{i_l}}{\partial x_{j_s}}(y)\right]_{l,s=1}^{k}\,,
\end{equation*}
where $y$ is a point of approximate differentiability of $f$. Thus, for a Sobolev mapping the functions
$\frac{\partial f_{J}}{\partial x_{I}}\colon\Omega\to\bR$ are a.e.\ well defined.

For $x\in\R^n$ and $r>0$, we define the Euclidean ball $B(x,r)=\{y\in\R^n: |x-y|<r\}$.
We will use the notation $\mathbb B$ for the unit ball $B(0,1)$ in $\bR^n$.
The notion of integration of $n-1$-form over $n-1$-manifolds in $\bR^n$ can be extended to Sobolev mappings. For our purposes, it is enough to consider integration on spheres.

Let $n\geq3$ be an integer. Let $m\geq n-1$ and $f\in W^{1,1}_\loc(\Omega;\bR^m)$.
Let $z\in\Omega$ be fixed. Then for $\cH^1$-a.e.\ $r>0$ such that $B(z,r)\subset\Omega$
all the partial derivatives $\frac{\partial f_i}{\partial y_j}$ belong to $L^1\big(\partial B(z,r),\cH^{n-1}\res\partial B(z,r)\big)$.
We choose one of these $r$, fix $B=B(z,r)$ and $B^{n-1}=\{x\in\R^{n-1}: |x|<1\}$. We take two bi-Lipschitz diffeomorphisms 
\[
\psi_i\colon B^{n-1}\to U_i\subset\partial B
\]
that define an orientation on $\partial B$, where
$i=1,2$ and $\partial B=U_1\cup U_2$. These conditions imply that the compositions $\frac{\partial f_L}{\partial y_I}\circ \psi_i$, with $I\in I_{n-1,n}$ and $L\in I_{n-1,m}$, are $\mathcal L^{n-1}$-measurable. We fix a standard partition of unity $\{\Upsilon_1,\Upsilon_2\}$ subordinate to the open covering $\{U_1,U_2\}$
and define for an $\mathcal H^{n-1}$--measurable $g\colon\partial B\to\bR$ the {\em oriented integral}
\beq\label{def-oriented}
\int_{\partial B}g\,df_L=\sum_{i=1}^2\int_{\psi_i^{-1}(U_i)}\!\!
\Upsilon_i(\psi_i(\xi))\,g\left(\psi_i(\xi)\right)\!\!\!\!\sum_{I\in I_{n-1,n}}\frac{\partial f_L}{\partial y_I}\left(\psi_i(\xi)\right)
\frac{\partial \left(\psi_i\right)_I}{\partial(x_1,\ldots,x_{n-1})}(\xi)\,d\xi,
\eeq
whenever it is well defined and independent from the choice of the partition of unity. From Cauchy-Schwarz inequality, this occurs for instance when  \[
\int_{\partial B}|g||Df|^{n-1}\,d\mathcal H^{n-1}<\infty,
\]
where we denote by $Df$ the Jacobian matrix of $f$ and with a slight abuse of notation we use the symbol $|\cdot|$ also for the Frobenius norm of matrices. Indeed, setting $L=(j_1,\ldots,j_{n-1})$, $F_L=(f_{j_1},\ldots,f_{j_{n-1}})$
and denoting by $JF_L$ the Jacobian of $F_L$, from the area formula and Hadamard's inequality we obtain
\begin{eqnarray}\label{estimation}
&&\sum_{i=1}^2\int_{\psi_i^{-1}(U_i)}\bigg|\Upsilon_i\circ \psi_i\;g\circ \psi_i\sum_{I\in I_{n-1,n}}\frac{\partial F_L}{\partial y_I}\circ \psi_i\;
\frac{\partial \left(\psi_i\right)_I}{\partial(x_1,\ldots,x_{n-1})}\bigg| \\
&&\le \sum_{i=1}^2\int_{\psi_i^{-1}(U_i)} \big|\Upsilon_i\circ \psi_i\;g\circ \psi_i\, JF_L\circ \psi_i\,J\psi_i\big| = \sum_{i=1}^2\int_{U_i} \big|\Upsilon_i\;g\; JF_L\big|\,d\cH^{n-1} \notag \\
&&=\int_{\partial B} |g|\, JF_L\,d\cH^{n-1}\leq \sqrt{n}\int_{\partial B} |g||\nabla f_{j_1}|\ldots|\nabla f_{j_{n-1}}|
\leq \sqrt{n}\int_{\partial B} |g|\, |Df|^{n-1}\,d\cH^{n-1}\,. \notag
\end{eqnarray}
Thus, it suffices to assume that $g\in L^\infty(\partial B,\cH^{n-1}\res\der B)$ and $|Df|\in L^{n-1}(\partial B,\cH^{n-1}\res\der B)$, which is the case for $\cH^1$-a.e. $r$, if $f\in W^{1,n-1}_\loc(\Omega;\bR^m)$.

The same assumptions apply in the case $n=2$, where we have oriented integration of Sobolev 1-forms over circles in $\bR^2$. If $f\in W^{1,1}_\loc(\Omega,\bR^m)$ for $m\geq1$ and $z\in\Omega$, then, as before, $\frac{\partial f_l}{\partial y_j}$ with $j=1,2$ and $l\in I_{1,m}$ belong to $L^1(\partial B,\cH^1\res \der B)$ for $\mathcal H^1$-almost every $r>0$ such that $B(z,r)\subset\Omega$. We fix one such $r$, set $B=B(z,r)$ and define
\[
v(t)=(v_1(t),v_2(t))=(z_1+r\cos t,z_2+r\sin t),
\]
where $t\in[-\pi,\pi]$. This curve parameterizes $\partial B$. For a measurable $g\colon\partial B\to\bR$, we set
$$
\int_{\partial B}g\,df_l=\int_{-\pi}^\pi g\circ v\sum_{j=1}^2\frac{\partial f_l}{\partial y_j}\circ v\;v_j',
$$
if the integral is defined. Since
$$
\int_{-\pi}^\pi\bigg|g\circ v\sum_{j=1}^2\frac{\partial f_l}{\partial y_j}\circ v\,v_j'\bigg|\leq\int_{-\pi}^\pi|g|\circ v\,|\nabla f_l|\circ v\,\left|v'\right|=\int_{\partial B}|g||\nabla f_l|d\mathcal H^1,
$$
as in the case $n>2$, the assumption $g\in L^\infty(\partial B,\cH^1\res\der B)$ suffices, because we have $|\nabla f_l|\in L^1(\partial B,\cH^1\res\der B)$ for every $l=1,\ldots,m$.


\section{Exterior differentiation of pullback Sobolev $(n-1)$-forms in $\R^n$}

\begin{lemma}\label{lemma}
Consider the integers $n\geq3$, $m\geq n-1$ and $J=(i_1,\ldots,i_{n-1})\in I_{n-1,m}$. Let $f,h\in W^{1,1}(\mathbb B,\bR^m)$ and let $r\in(0,1)$ be such that 
all $\ds\frac{\der f_i}{\der y_j}$ and $\ds\frac{\der h_i}{\der y_j}$ are measurable on $\der B(0,r)$ and we have
\[
\int_{\partial B(0,r)}|Df|^{n-1}d\mathcal H^{n-1}<\infty\quad \mbox{and } \quad \int_{\partial B(0,r)}|Dh|^{n-1}d\mathcal H^{n-1}<\infty.
\]
Then for every $g\in L^\infty\lls \der B(0,r),\cH^{n-1}\res\der B(0,r)\rrs$, the following estimate holds
\begin{align*}
&\left|\int_{\partial B(0,r)}g\,df_J-\int_{\partial B(0,r)}g\,dh_J\right| \leq C\, \|g\|_{L^\infty(\partial B(0,r))}\sum_{k=1}^{n-1}\left(\int_{\partial B(0,r)}|Df|^{n-1}d\mathcal H^{n-1}\right)^{\frac{k-1}{n-1}}\\
&\qquad\qquad\left(\int_{\partial B(0,r)}|Df-Dh|^{n-1}d\mathcal H^{n-1}\right)^{\frac{1}{n-1}} \left(\int_{\partial B(0,r)}|Dh|^{n-1}d\mathcal H^{n-1}\right)^{\frac{n-1-k}{n-1}}.
\end{align*}
\end{lemma}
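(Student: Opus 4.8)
The plan is to write $df_J$ and $dh_J$ as the two ends of a telescoping chain of ``mixed'' pullback $(n-1)$-forms, each pair of consecutive terms differing in exactly one of the $n-1$ wedge factors, and then to bound each increment by the integrability estimate \eqref{estimation}. Set $u=f-h$, write $J=(i_1,\ldots,i_{n-1})$ and $B=B(0,r)$. Since the wedge product of one-forms is multilinear and $df_{i_k}=dh_{i_k}+du_{i_k}$, the standard telescoping identity in the exterior algebra gives, $\mathcal H^{n-1}$-a.e.\ on $\partial B$,
\begin{equation*}
df_J-dh_J=\sum_{k=1}^{n-1}df_{i_1}\wedge\cdots\wedge df_{i_{k-1}}\wedge du_{i_k}\wedge dh_{i_{k+1}}\wedge\cdots\wedge dh_{i_{n-1}}=:\sum_{k=1}^{n-1}dF^{(k)}_{(1,\ldots,n-1)},
\end{equation*}
where for each $k$ I set $F^{(k)}=(f_{i_1},\ldots,f_{i_{k-1}},u_{i_k},h_{i_{k+1}},\ldots,h_{i_{n-1}})\in W^{1,1}(\mathbb B,\bR^{n-1})$, so that the $k$-th summand equals $(F^{(k)})^*(dx_1\wedge\cdots\wedge dx_{n-1})$. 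Here $|DF^{(k)}|\le\sqrt{n-1}\,(|Df|+|Dh|)$ on $\partial B$, hence $|DF^{(k)}|^{n-1}\le C_n(|Df|^{n-1}+|Dh|^{n-1})\in L^1(\partial B)$, so by \eqref{def-oriented} and \eqref{estimation} the oriented integral $\int_{\partial B}g\,dF^{(k)}_{(1,\ldots,n-1)}$ is well defined for $g\in L^\infty(\partial B)$. Fixing once and for all the parametrizations $\psi_1,\psi_2$ and the partition of unity $\{\Upsilon_1,\Upsilon_2\}$, the oriented integral is $\bR$-linear in the differential form, so integrating the displayed identity against $g$ yields
\begin{equation*}
\int_{\partial B}g\,df_J-\int_{\partial B}g\,dh_J=\sum_{k=1}^{n-1}\int_{\partial B}g\,dF^{(k)}_{(1,\ldots,n-1)}.
\end{equation*}

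Next I would estimate each summand. Applying the chain of inequalities in \eqref{estimation} with $f$ replaced by $F^{(k)}$ and $L=(1,\ldots,n-1)$ gives
\begin{equation*}
\left|\int_{\partial B}g\,dF^{(k)}_{(1,\ldots,n-1)}\right|\le\sqrt n\int_{\partial B}|g|\,|\nabla F^{(k)}_1|\cdots|\nabla F^{(k)}_{n-1}|\,d\mathcal H^{n-1}\le\sqrt n\,\|g\|_{L^\infty(\partial B)}\int_{\partial B}|Df|^{k-1}\,|Df-Dh|\,|Dh|^{n-1-k}\,d\mathcal H^{n-1},
\end{equation*}
where in the last step I used $|\nabla f_{i_j}|\le|Df|$, $|\nabla u_{i_j}|\le|Du|=|Df-Dh|$ and $|\nabla h_{i_j}|\le|Dh|$ on $\partial B$. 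Then the generalized Hölder inequality on $\partial B$, applied to the product of the $n-1$ functions ($k-1$ copies of $|Df|$, one copy of $|Df-Dh|$, and $n-1-k$ copies of $|Dh|$), each with conjugate exponent $n-1$, bounds this last integral by
\begin{equation*}
\left(\int_{\partial B}|Df|^{n-1}d\mathcal H^{n-1}\right)^{\frac{k-1}{n-1}}\left(\int_{\partial B}|Df-Dh|^{n-1}d\mathcal H^{n-1}\right)^{\frac{1}{n-1}}\left(\int_{\partial B}|Dh|^{n-1}d\mathcal H^{n-1}\right)^{\frac{n-1-k}{n-1}}.
\end{equation*}
Summing over $k=1,\ldots,n-1$ then produces the asserted estimate with $C=\sqrt n$.

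There is no genuine analytic obstacle: the argument is a multilinear-algebra identity combined with Hadamard's and Hölder's inequalities, exactly in the spirit of \eqref{estimation}. The only points needing care are bookkeeping. First, one must check that \eqref{def-oriented} is legitimately linear in the form and well defined for each mixed map $F^{(k)}$ — the latter follows from the bound $|DF^{(k)}|\in L^{n-1}(\partial B)$ together with \eqref{estimation}. Second, one must justify passing from the pointwise a.e.\ telescoping identity of forms to the identity of oriented integrals; this is immediate because, after composing with $\psi_i$ and multiplying by $\Upsilon_i\circ\psi_i$, every term of the telescoping sum is an $L^1(B^{n-1})$ function and their sum agrees a.e.\ with the integrand defining $\int_{\partial B}g\,(df_J-dh_J)$, so the equality survives integration.
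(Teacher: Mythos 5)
Your proposal is correct and follows essentially the same route as the paper's own proof: a telescoping decomposition of $df_J-dh_J$ into $n-1$ terms each differing in a single wedge factor, followed by the Hadamard-type bound from \eqref{estimation} and the generalized H\"older inequality on $\partial B(0,r)$. The extra care you take in verifying that each mixed oriented integral is well defined and that the oriented integral is linear in the form is a point the paper leaves implicit, but it changes nothing of substance.
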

\begin{proof}
We set the difference to be estimated
\[
\lambda=\left|\int_{\partial B(0,r)}g\,df_J-\int_{\partial B(0,r)}g\,dh_J\right|\,.
\]
Taking into account the definition \eqref{def-oriented} and the estimates \eqref{estimation}, we get
{\small
\begin{align*}
\lambda &= \left|\int_{\partial B(0,r)}g\,df_{i_1}\wedge\ldots\wedge df_{i_{n-1}}-\int_{\partial B(0,r)}g\,dh_{i_1}\wedge\ldots\wedge dh_{i_{n-1}}\right|\\
&\leq\sum_{k=1}^{n-1}\left|\int_{\partial B(0,r)}g\,df_{i_1}\wedge\ldots\wedge df_{i_{k-1}}\wedge\left(df_{i_k}-dh_{i_k}\right)\wedge dh_{i_{k+1}}\wedge\ldots\wedge dh_{i_{n-1}}\right|\\&\leq
C\sum_{k=1}^{n-1}\int_{\partial B(0,r)}|g||Df|^{k-1}|D(f-h)||Dh|^{n-1-k}d\mathcal H^{n-1}\\&\leq
C\,\|g\|_{L^\infty(\partial B(0,r))}\sum_{k=1}^{n-1}\left(\int_{\partial B(0,r)}|Df|^{n-1}d\mathcal H^{n-1}\right)^{\frac{k-1}{n-1}}
\left(\int_{\partial B(0,r)}|Df-Dh|^{n-1}d\mathcal H^{n-1}\right)^{\frac{1}{n-1}} \\
&\quad\left(\int_{\partial B(0,r)}|Dh|^{n-1}d\mathcal H^{n-1}\right)^{\frac{n-1-k}{n-1}}.
\end{align*}
}
\end{proof}

The following theorem corresponds to Theorem~\ref{th1} in the case $k=n-1$.

\begin{theorem}\label{k=n-1}
Let $m\geq n\geq2$ be positive integers and assume that one of the following conditions holds: $p>n-1$ if $n>2$, or $p=1$ if $n=2$. Let $\Omega\subset\R^n$ be open, let $f\in W^{1,p}_\loc(\Omega;\bR^m)$ and let $\eta$ be a continuously differentiable $(n-1)$-form of $\bR^m$. Then the condition $f^*\eta=0$ a.e.\ in $\Omega$ implies that $f^*(d\eta)=0$ a.e.\ in $\Omega$.
\end{theorem}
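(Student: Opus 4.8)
The plan is to reduce everything to a statement on spheres, and then pass to the limit along a blow-up sequence. Fix a point $z\in\Omega$ which is a Lebesgue point simultaneously for $|Df|^p$, for the coefficients of $\eta$ and $d\eta$ pulled back by $f$, and for which the approximate differential $df_z$ exists; almost every $z$ will do. The first reduction is purely algebraic: writing $\eta=\sum_{J\in I_{n-1,m}}\eta_J\,dx_J$, we have $f^*\eta=\sum_J (\eta_J\circ f)\,df_J$ and $f^*(d\eta)=\sum_{\ell=1}^m\sum_J\big(\tfrac{\partial\eta_J}{\partial x_\ell}\circ f\big)\,df_\ell\wedge df_J$. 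The claim $f^*(d\eta)=0$ a.e.\ is an $n$-form identity on the $n$-dimensional domain, so it amounts to showing that the single scalar function $\sum_{\ell,J}\big(\tfrac{\partial\eta_J}{\partial x_\ell}\circ f\big)\,\tfrac{\partial(f_\ell,f_J)}{\partial(y_1,\ldots,y_n)}$ vanishes a.e.\ on $\Omega$; equivalently, that $\int_{B(z,r)}f^*(d\eta)=0$ for a.e.\ small $r>0$ and a.e.\ $z$. By Stokes' theorem for smooth forms one \emph{expects} $\int_{B(z,r)}f^*(d\eta)=\int_{\partial B(z,r)}f^*\eta$, and the right-hand side should vanish because $f^*\eta=0$ a.e.; the content of the theorem is to make this rigorous when $f$ is only Sobolev and $f^*\eta$ may fail to be locally summable.

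Here is how I would carry out the limiting argument. For a.e.\ $r\in(0,1)$ (small enough that $B(z,r)\Subset\Omega$) the restrictions of the partial derivatives of $f$ to $\partial B(z,r)$ lie in $L^{n-1}$, so by Section~\ref{OInt} the oriented integral $\int_{\partial B(z,r)}(\eta_J\circ f)\,df_J$ makes sense. Consider the rescaled maps $f_s(y)=f(z+sy)$ on $\mathbb B$ for $s\downarrow 0$; the Sobolev assumption $p>n-1$ (or $p=1$ when $n=2$) together with the Lebesgue-point choice of $z$ gives, after rescaling, $\fint_{\mathbb B}|Df_s|^{p}\to |df_z|^{p}$, so along a subsequence $f_s\to \ell_z$ in $W^{1,p}(\mathbb B,\R^m)$ where $\ell_z(y)=f(z)+df_z\,y$ is affine. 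Using the Sobolev trace/imbedding theorem on the $(n-1)$-sphere $\partial\mathbb B$ (this is the new ingredient highlighted in the introduction), for a further subsequence the restrictions $Df_s|_{\partial\mathbb B}$ converge to $df_z|_{\partial\mathbb B}$ in $L^{n-1}(\partial\mathbb B)$, and $f_s|_{\partial\mathbb B}\to \ell_z|_{\partial\mathbb B}$ uniformly. Now Lemma~\ref{lemma} is exactly the tool that turns this $L^{n-1}$-convergence of gradients into convergence of oriented integrals: applying it with $g=\eta_J\circ f_s$ (uniformly bounded, since $\eta$ is continuous and $f_s\to\ell_z$ uniformly on $\partial\mathbb B$) and with the two maps $f_s$ and $\ell_z$, the right-hand side of the Lemma is controlled by $\|Df_s-df_z\|_{L^{n-1}(\partial\mathbb B)}\to 0$ times bounded factors, whence
\[
\int_{\partial\mathbb B}(\eta_J\circ f_s)\,d(f_s)_J\ \longrightarrow\ \int_{\partial\mathbb B}(\eta_J\circ \ell_z)\,d(\ell_z)_J .
\]
On the left, a change of variables $y\mapsto z+sy$ shows $\int_{\partial\mathbb B}(\eta_J\circ f_s)\,d(f_s)_J = s^{-(n-1)}\int_{\partial B(z,s)}(\eta_J\circ f)\,df_J$; and because $f^*\eta=0$ a.e.\ in $\Omega$, integrating the Euclidean identity $\sum_J(\eta_J\circ f)\,df_J=0$ over $\partial B(z,s)$ (valid for a.e.\ $s$, using that the oriented boundary integral of an a.e.-vanishing summable form is zero, which follows from the Fubini-type representation of the oriented integral already set up in \eqref{def-oriented}) gives that the left-hand side is identically $0$. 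Hence $\int_{\partial\mathbb B}f_{\ell_z}^*\eta=0$, where $f_{\ell_z}=\ell_z$. But $\ell_z$ is smooth (affine!), so the classical Stokes theorem applies: $\int_{\mathbb B}\ell_z^*(d\eta)=\int_{\partial\mathbb B}\ell_z^*\eta=0$. Since $\ell_z^*(d\eta)$ is a constant-coefficient $n$-form, this forces $\ell_z^*(d\eta)=0$, i.e.\ $\sum_{\ell,J}\tfrac{\partial\eta_J}{\partial x_\ell}(f(z))\,\tfrac{\partial((\ell_z)_\ell,(\ell_z)_J)}{\partial(y_1,\ldots,y_n)}=0$. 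The minors of $\ell_z$ are precisely the minors of $df_z$, so this is exactly the statement that the scalar coefficient of $f^*(d\eta)$ vanishes at $z$. As $z$ ranges over a set of full measure, $f^*(d\eta)=0$ a.e.

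The main obstacle is the passage to the limit in the oriented boundary integral, which is where the Sobolev threshold $p>n-1$ is consumed: one needs the restrictions of $Df$ to spheres to be in $L^{n-1}$ for a.e.\ radius \emph{and} to converge in $L^{n-1}$ under rescaling, and one needs the coefficients $\eta_J\circ f_s$ to converge uniformly on the sphere; the subcritical case $p=n-1$ (for $n>2$) is not enough to guarantee either the trace control or the continuity of $f_s$ on $\partial\mathbb B$, which is why it is excluded. A secondary technical point is to justify choosing a single radius $r$ (equivalently, a scale $s_j\downarrow 0$) for which all of the following hold at once: $Df\in L^{n-1}(\partial B(z,s_j))$, the rescaled gradients converge in $L^{n-1}(\partial\mathbb B)$, and $\int_{\partial B(z,s_j)}f^*\eta=0$; this is a routine measure-theoretic selection using that each condition holds for a.e.\ radius, combined with Fubini in polar coordinates to pass from the a.e.-in-$\Omega$ vanishing of $f^*\eta$ to its a.e.-in-$r$ vanishing on spheres. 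Everything else — the algebraic reduction, the change of variables under rescaling, and the final constant-coefficient linear algebra — is routine.
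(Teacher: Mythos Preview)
Your blow-up has a scale mismatch that makes the limit vacuous. With $f_s(y)=f(z+sy)$ one has $Df_s(y)=s\,Df(z+sy)$, hence $Df_s\to 0$ and $f_s\to f(z)$ (the constant map), not $\ell_z$; the limit of $\int_{\partial\mathbb B}f_s^*\eta=0$ is then $0=0$ and says nothing about $d\eta$. If instead you normalize correctly, setting $f^{z,s}(y)=s^{-1}\big(f(z+sy)-f(z)\big)$, then indeed $f^{z,s}\to g:=df_z(\cdot)$, but now $(f^{z,s})^*\eta$ is \emph{not} zero: its coefficients are $\eta_J\big(f^{z,s}(y)\big)$, which have nothing to do with the hypothesis $\sum_J\eta_J(f(\cdot))\,df_J=0$. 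So neither naive rescaling produces a quantity that is simultaneously zero and has the nontrivial limit you need. (Incidentally, your change-of-variables identity is off: oriented integrals of forms are diffeomorphism-invariant, so there is no factor $s^{-(n-1)}$.)

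The paper extracts the information one order deeper in $s$. From $f^*\eta=0$ one gets $\sum_J\eta_J(f(z+sy))\,(df_J^{z,s})_y=0$; subtracting the constants $\eta_J(f(z))$ and dividing by $s$ yields
\[
\sum_J(\eta_J\circ f)^{z,s}(y)\,(df_J^{z,s})_y=-\frac{1}{s}\sum_J\eta_J(f(z))\,(df_J^{z,s})_y,
\]
whose right-hand side integrates to zero over the sphere because $\int_{\partial B(0,t)}df_J^{z,s}=0$ by a smoothing argument (property~\eqref{Prop8}). On the left, the Sobolev embedding on spheres is used not only for convergence of the gradients but to show that $f(z+s_i\cdot)\to f(z)$ \emph{uniformly} on $\partial B(0,\tau_0)$; combined with the mean value theorem this gives the uniform convergence $(\eta_J\circ f)^{z,s_i}\to\sum_j\partial_j\eta_J(f(z))\,g_j$, and then Lemma~\ref{lemma} handles $df_J^{z,s_i}\to dg_J$. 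The limiting identity $\int_{\partial B(0,\tau_0)}\sum_{j,J}\partial_j\eta_J(f(z))\,g_j\,dg_J=0$ has \emph{frozen} coefficients $\partial_j\eta_J(f(z))$, so after Stokes the integrand is a constant $n$-form and a single radius suffices to conclude pointwise vanishing. Your intended endpoint $\int_{\mathbb B}\ell_z^*(d\eta)=0$ would instead carry the variable coefficients $\partial_j\eta_J(\ell_z(y))$ (recall $\eta$ is only $C^1$), so ``$\ell_z^*(d\eta)$ is constant-coefficient'' is false and one radius would not give the pointwise claim.
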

\begin{proof}
We fix $\eta =\ds\sum_{J\in I_{n-1,m}}\eta_J\,dx_J$, hence our assumption yields
$$
0=f^*\eta=\sum_{J\in I_{n-1,m}}\eta_J\circ f\,df_J\qquad \mbox{a.e.\ in $\Omega$.}
$$
Taking into account formula
$$
d\eta=\sum_{J\in I_{n-1,m}}d\eta_J\wedge dx_J=\sum_{J\in I_{n-1,m}}\sum_{j=1}^m\frac{\partial\eta_J}{\partial x_j}dx_j\wedge dx_J,
$$
our objective is to show that
\beq\label{eq:pullb}
f^*(d\eta)=\sum_{J\in I_{n-1,m}}\sum_{j=1}^m\frac{\partial\eta_J}{\partial x_j}\circ f\,df_j\wedge df_J=0\qquad \mbox{a.e.\ in $\Omega$.}
\eeq
Notice that the coefficients of the differential form in \eqref{eq:pullb} are well defined a.e., but they may not be locally integrable.
Let us consider the case $n>2$. For any mapping $\Phi\colon\Omega\to\bR^k$ with some $k\in\bN$, and $z\in\Omega$ and $r>0$ such that $B(z,r)\subset\Omega$, we denote
\begin{equation*}
\Phi^{z,r}(y)=\frac{\Phi(z+ry)-\Phi(z)}{r}.
\end{equation*}
Clearly, if $\Phi\in W^{1,p}(\Omega;\bR^k)$, then $\Phi^{z,r}\in W^{1,p}(\mathbb B;\bR^k)$ for $r>0$ sufficiently small.

Let us fix a point $z\in\Omega$, which is a Lebesgue point for both
\[
y\to |f(y)-f(z)|^p\quad\mbox{ and }\quad y\to |Df(y)-Df(z)|^p
\]
and such that the following $L^p$-differentiability holds
\begin{equation}\label{chudoshodimost}
\frac1{r^p}\meanint_{B(z,r)}|f(y)-f(z)-df_{z}(y-z)|^pdy\to0.
\end{equation}
It is well known that the set of all points with these properties
has full measure in $\Omega$, see for instance \cite{EvansGariepy}. We introduce the linear mapping $g\colon\R^n\to\bR^m$ defined by
\[
y\to g(y)=df_z(y).
\]
The limit
\[
\meanint_{\mathbb B}|f(z+ry)-f(z)|^pdy=\meanint_{B(z,r)}|f-f(z)|^p\to0 \quad \mbox{as $r\to0$}
\]
joined with the coarea formula provides an infinitesimal sequence
$\{r_i\}_{i\in\bN}\subset(0,1)$ such that for $\mathcal H^1$--a.e. $t\in(0,1)$ there holds
\beq\label{Prop1}
\lim_{i\to\infty}f(z+r_iy)=f(z) \quad \mbox{ for  $\mathcal H^{n-1}\text{--a.e.\ } y\in\partial B(0,t)$.}
\eeq
Since $p>n-1$, up to a modification of $f$ on an $\mathcal L^n$--negligible set we can find a set $S\subset(0,\max_ir_i)$ of full
measure such that 
\[
f|_{\partial B(z,t)}\in W^{1,p}\big(\partial B(z,t)\big)\cap C^0\big(\partial B(z,t)\big)
\]
for each $t\in S$. Then the set $\bigcap_{i\ge 1} \frac{S}{r_i}$ has also full measure in $(0,1)$ and for every $t$ in this set we have that
\begin{equation}
f^{z,r_i} \in W^{1,p}\big(\partial B(0,t)\big)\cap C^0\big(\partial B(0,t)\big)
\end{equation}
for each $i\in\bN$. Thus, for these $t$'s we can apply the Sobolev imbedding theorem on spheres, getting
\begin{align}\label{Prop2}
\sup_{v,w\in\partial B(0,t)}|f(z+r_iw)-f(z+r_iv)|=&\;r_i\sup_{v,w\in\partial B(0,t)}|f^{z,r_i}(w)-f^{z,r_i}(v)|\\
\leq&\; C\,t^{1-\frac{n-1}p}r_i\left(\int_{\partial B(0,t)}|Df^{z,r_i}|^pd\mathcal H^{n-1}\right)^{1/p}\notag
\end{align}
for each $i\in\bN$. 
From \eqref{chudoshodimost}, we have
\begin{equation*}
\int_{\mathbb B}|f^{z,r_i}-g|^p=\frac{C}{r_i^p}\,\meanint_{B(z,r_i)}|f(y)-f(z)-df_{z}(y-z)|^pdy\to0,
\end{equation*}
as $i\to\infty$. 
Again, up to extracting a subsequence we achieve
\begin{equation}\label{Prop4}
\lim_{i\to\infty}\int_{\partial B(0,t)}|f^{z,r_i}-g|^pd\mathcal H^{n-1}=0\quad\mbox{for $\mathcal H^1$--a.e.\ $t\in(0,1)$.}
\end{equation}
Similarly, since $z$ is also a Lebesgue point of $y\to |Df(y)-Df(z)|^p$ and
\beq\label{eq:fzri}
\left(df^{z,r_i}\right)_y=\left(df\right)_{z+r_iy},
\eeq
up to extracting a subsequence from $\{r_i\}_{i\in\N}$, we also obtain
\begin{equation}\label{Prop5}
\lim_{i\to\infty}\int_{\partial B(0,t)}|Df^{z,r_i}-Dg|^pd\mathcal H^{n-1}=0\quad \mbox{for $\mathcal H^1$--a.e.\ $t\in(0,1)$.}
\end{equation}
Repeating the application of the Sobolev imbedding theorem on spheres, we get
\begin{equation}\label{Prop6}
\sup_{v,w\in\partial B(0,t)}\!\!\!|(f^{z,r_i}-g)(w)-(f^{z,r_i}-g)(v)|\leq Ct^{1-\frac{n-1}p}\!\!
\left(\int_{\partial B(0,t)}\!\!\!\!|Df^{z,r_i}-Dg|^pd\mathcal H^{n-1}\right)^{1/p}
\end{equation}
for every $i\in\bN$, for $\mathcal H^1$--a.e.\ $t\in(0,1)$.
%
Joining a smoothing argument with Lemma~\ref{lemma}, we also have
\begin{equation}\label{Prop8}
\int_{\partial B(0,t)}df^{z,r_i}_J=0\,\,\text{ for each }\,\,i\in\bN\,\,\text{ and }\,\,J\in I_{n-1,m}
\end{equation}
for $\mathcal H^1$--almost every $t\in[0,1]$, since this property holds for smooth functions. 
%
%
Now, we apply our assumption $\sum_{J\in I_{n-1,m}}\eta_J\circ fdf_J=0$ and \eqref{eq:fzri} to obtain that for almost every $y\in\mathbb B$ there holds
\begin{equation*}
\sum_{J\in I_{n-1,m}}\eta_J\left(f(z+r_iy)\right)\left(df_J^{z,r_i}\right)_y=0
\end{equation*}
for each $i\in\bN$. Using Fubini theorem and adding the suitable terms to each side of the equality, we observe
\begin{align}\label{Prop9}
\sum_{J\in I_{n-1,m}}\left(\eta_J\circ f\right)^{z,r_i}(y)\left(df_J^{z,r_i}\right)_y=
-\frac{1}{r_i}\sum_{J\in I_{n-1,m}}\eta_J\left(f(z)\right)\left(df_J^{z,r_i}\right)_y
\end{align}
for $\mathcal H^{n-1}$--a.e. $y\in\partial B(0,t)$ for each $i\in\bN$ for $\mathcal H^1$--a.e. $t\in(0,1)$.

We can fix some $\tau_0\in(0,1)$ such that all properties from \eqref{Prop1} to \eqref{Prop9} hold with $t=\tau_0$. Our next objective is to prove the convergence
\begin{equation}\label{convergence}
\int_{\partial B(0,\tau_0)}\sum_{J\in I_{n-1,m}}\left(\eta_J\circ f\right)^{z,r_i}df_J^{z,r_i}\to
\int_{\partial B(0,\tau_0)}\sum_{J\in I_{n-1,m}}\sum_{j=1}^m\frac{\partial\eta_J}{\partial x_j}\left(f(z)\right)g_jdg_J,
\end{equation}
when $i\to\infty$. Notice that by the discussion in Section~\ref{OInt}, this sequence of oriented integrals is well defined for the choice of $\tau_0$, because the function $\left(\eta_J\circ f\right)^{z,r_i}$ is continuous on $\partial B(0,\tau_0)$ and
\[
\int_{\partial B(0,\tau_0)}|Df^{z,r_i}|^pd\mathcal H^{n-1}<\infty,
\]
due to \eqref{Prop5}. Next, we will establish the following
\begin{center}
\textbf{claim:} the sequence $\left\{\left(\eta_J\circ f\right)^{z,r_i}\right\}_i$ converges to $\ds \sum_{j=1}^m\frac{\partial\eta_J}{\partial x_j}\left(f(z)\right)g_j$ uniformly on $\partial B(0,\tau_0)$ for all $J\in I_{n-1,m}$.
\end{center}
Let us fix $J\in I_{n-1,m}$. By the property~\eqref{Prop4}, passing to another subsequence, we have $f^{z,r_i}(w)\to g(w)$ for $\mathcal H^{n-1}$--almost every $w\in\partial B(0,\tau_0)$. We fix one such $w$ and obtain by properties~\eqref{Prop6} and~\eqref{Prop5} that
\begin{equation*}
\sup_{y\in\partial B(0,\tau_0)}|(f^{z,r_i}-g)(y)-(f^{z,r_i}-g)(w)|\to0,
\end{equation*}
when $i\to\infty$. Hence the triangle inequality gives
\begin{equation}\label{pervajashodimost}
\lim_{i\to\infty}\sup_{y\in\partial B(0,\tau_0)}|f^{z,r_i}(y)-g(y)|=0.
\end{equation}
On the other hand, we similarly have convergence almost everywhere $f(z+r_iw)\to f(z)$ by~\eqref{Prop1} and the convergence
\begin{equation*}
\sup_{y,w\in\partial B(0,\tau_0)}|f(z+r_iy)-f(z+r_iw)|
\leq C\tau_0^{1-\frac{n-1}p}r_i\left(\int_{\partial B(0,\tau_0)}|Df^{z,r_i}|^pd\mathcal H^{n-1}\right)^{1/p}\to0,
\end{equation*}
when $i\to0$, implied by~\eqref{Prop2} and the fact that, by~\eqref{Prop5}, the sequence $$\left\{\int_{\partial B(0,\tau_0)}|Df^{z,r_i}|^p\right\}_i$$ is bounded from above independently of $i$. As before, we combine these two convergences to obtain $f(z+r_iy)\to f(z)$, when $i\to\infty$, uniformly in $y\in\partial B(0,t)$.

We continue applying the mean value theorem to the function $\eta_J$, obtaining for each $i\in\bN$ and $y\in\partial B(0,\tau_0)$ a point $\tau_{i,J,y}\in\bR^m$ belonging to the segment $[f(z),f(z+r_iy)]$, such that
\begin{eqnarray}\label{meanvalue}
\qquad\left(\eta_J\circ f\right)^{z,r_i}(y)
&=&\frac{1}{r_i}\bigl(\eta_J\left(f(z+r_iy)\right)-\eta_J\left(f(z)\right)\bigr) \\
& =&\frac{1}{r_i}\sum_{j=1}^m\frac{\partial\eta_J}{\partial x_j}(\tau_{i,J,y})\left(f_j(z+r_iy)-f_j(z)\right)
=\sum_{j=1}^m\frac{\partial\eta_J}{\partial x_j}(\tau_{i,J,y})f_j^{z,r_i}(y). \notag
\end{eqnarray}
In addition, since 
$$
\sup_{y\in\partial B(0,\tau_0)}|\tau_{i,J,y}-f(z)|\leq\sup_{y\in\partial B(0,\tau_0)}|f(z+r_iy)-f(z)|\to0,
$$
when $i\to\infty$, the continuity of $\ds\frac{\partial\eta_J}{\partial x_j}$ at the point $f(z)$ implies that there exists
\begin{equation}\label{vtorajashodimost}
\lim_{i\to\infty}\left(\max_{1\le j\le m} \sup_{y\in\partial B(0,\tau_0)}\left|\frac{\partial\eta_J}{\partial x_j}(\tau_{i,J,y})- \frac{\partial\eta_J}{\partial x_j}(f(z))\right|\right)=0
\end{equation}
Finally, it is rather easy to conclude from~\eqref{meanvalue},~\eqref{pervajashodimost} and~\eqref{vtorajashodimost} the following uniform convergence
$$
\left(\eta_J\circ f\right)^{z,r_i}(y)=\sum_{j=1}^m\frac{\partial\eta_J}{\partial x_j}(\tau_{i,J,y})f_{j}^{z,r_i}(y)
\to \sum_{j=1}^m\frac{\partial\eta_J}{\partial x_j}\left(f(z)\right)g_j(y),
$$
when $y$ varies in $\partial B(0,\tau_0)$. This completes the proof of the claim.

\noindent
Now, we estimate
\begin{align}\label{final}
&\left|\int_{\partial B(0,\tau_0)}\sum_{J\in I_{n-1,m}}\left(\eta_J\circ f\right)^{z,r_i}df_J^{z,r_i}-
\int_{\partial B(0,\tau_0)}\sum_{J\in I_{n-1,m}}\sum_{j=1}^m\frac{\partial\eta_J}{\partial x_j}\left(f(z)\right)g_jdg_J\right|\\
\leq&\sum_{J\in I_{n-1,m}}\left|\int_{\partial B(0,\tau_0)}\left[\left(\eta_J\circ f\right)^{z,r_i}-
\sum_{j=1}^m\frac{\partial\eta_J}{\partial x_j}\left(f(z)\right)g_j\right]df_J^{z,r_i}\right|\notag\\
+&\sum_{J\in I_{n-1,m}}\left|\int_{\partial B(0,\tau_0)}\left[\sum_{j=1}^m\frac{\partial\eta_J}{\partial x_j}\left(f(z)\right)g_j\right]
\left(df_J^{z,r_i}-dg_J\right)\right|.\notag
\end{align}
Taking into account \eqref{estimation}, we estimate the first sum on the right-hand side as follows
\begin{multline}\label{final1}
\sum_{J\in I_{n-1,m}}\left|\int_{\partial B(0,\tau_0)}\left[\left(\eta_J\circ f\right)^{z,r_i}-
\sum_{j=1}^m\frac{\partial\eta_J}{\partial x_j}\left(f(z)\right)g_j\right]df_J^{z,r_i}\right|\\\leq C\max_{J\in I_{n-1,m}}
\sup_{y\in\partial B(0,\tau_0)}\left|\left(\eta_J\circ f\right)^{z,r_i}(y)-
\sum_{j=1}^m\frac{\partial\eta_J}{\partial x_j}\left(f(z)\right)g_j(y)\right|\int_{\partial B(0,\tau_0)}|Df^{z,r_i}|^{n-1}d\mathcal H^{n-1}.
\end{multline}
Since the combination of Jensen's inequality and~\eqref{Prop5} implies
\begin{equation}\label{star}
\int_{\partial B(0,\tau_0)}|Df^{z,r_i}-Dg|^{n-1}d\mathcal H^{n-1}\to0\,\,\text{ when }\,\,i\to\infty,
\end{equation}
the factor $\int_{\partial B(0,\tau_0)}|Df^{z,r_i}|^{n-1}d\mathcal H^{n-1}$ on the right-hand side of~\eqref{final1} is bounded independently of $i$. Therefore, in view of the previous claim the sum in~\eqref{final1} converges to zero, as $i\to\infty$.
The convergence to zero of the second term on the right hand side of~\eqref{final} follows from Lemma~\ref{lemma},~\eqref{star} and again the boundedness of the sequence
\[
\int_{\partial B(0,\tau_0)}|Df^{z,r_i}|^{n-1}d\mathcal H^{n-1}.
\]
Thus, we conclude the validity of the limit \eqref{convergence}.
On the other hand, \eqref{Prop9} and~\eqref{Prop8} show that
\begin{align*}
\int_{\partial B(0,\tau_0)}\sum_{J\in I_{n-1,m}}\left(\eta_J\circ f\right)^{z,r_i}df_J^{z,r_i}
=-\frac{1}{r_i}\sum_{J\in I_{n-1,m}}\eta_J\left(f(z)\right)\int_{\partial B(0,\tau_0)}df_J^{z,r_i}=0,
\end{align*}
which together with~\eqref{convergence} implies
\begin{align*}
\int_{\partial B(0,\tau_0)}\sum_{J\in I_{n-1,m}}\sum_{j=1}^m\frac{\partial\eta_J}{\partial x_j}\left(f(z)\right)g_jdg_J=0.
\end{align*}
We apply the Stokes theorem to the last integral, obtaining
\begin{eqnarray*}
0&=&\int_{B(0,\tau_0)}\sum_{J\in I_{n-1,m}}\sum_{j=1}^m\frac{\partial\eta_J}{\partial x_j}\left(f(z)\right)dg_j\wedge dg_J \\
&=&
\mathcal L^n(B(0,\tau_0))\sum_{J\in I_{n-1,m}}\sum_{j=1}^m\frac{\partial\eta_J}{\partial x_j}\left(f(z)\right)\frac{\partial (f_j,f_J)}{\partial(y_1,\ldots,y_n)}(z),
\end{eqnarray*}
which yields
\begin{equation*}
\sum_{J\in I_{n-1,m}}\sum_{j=1}^m\frac{\partial\eta_J}{\partial x_j}\left(f(z)\right)\left(df_j\wedge df_J\right)_z=0.
\end{equation*}
The proof of the theorem in the case $n>2$ is complete, since the set of points $z$ with the required properties has full measure in $\Omega$.

The proof in the case $n=2$ is simpler. The details are left to the reader. The major difference compared to the previous proof is that the Sobolev imbedding theorem on circles for a mapping $f\in W^{1,1}(\Omega,\bR^m)$, $\Omega\subset\bR^2$, has the form
$$
\sup_{y,v\in\partial B(z,r)}|f(y)-f(v)|\leq C\int_{\partial B(z,r)}|Df|d\mathcal H^1
$$
for $z\in\Omega$ and $\mathcal H^1$-almost every $r>0$, such that $B(z,r)\subset\Omega$.
\end{proof}

\section{Slicing and lower dimensional pullback Sobolev differential forms}\label{Sect:Slice}

In this section we complete the proof of Theorem~\ref{th1}, considering the case $k<n-1$.
We will follow the slicing argument of \cite{MMM}, considering $(k+1)$-dimensional sections of the space $\bR^n$. Let us introduce the notation we need for this purpose. 

We write $(e_1,\ldots,e_n)$ for the canonical basis of $\bR^n$ and for a nonempty set of indices $\Gamma\subset\{1,\ldots,n\}$, we define the projections
\[
\pi_\Gamma\colon\bR^n\to\spa\{e_j\colon j\in\Gamma\}\quad \mbox{and}\quad \hat{\pi}_\Gamma\colon\bR^n\to \spa\{e_j\colon j\in\Gamma\}^\bot
\]
by $\pi_\Gamma(x)=\sum_{j\in\Gamma}x_je_j$ and $\hat{\pi}_\Gamma(x)=x-\pi_\Gamma(x)$ for each $x=(x_1,\ldots,x_n)\in\bR^n$. If $Q$ is an open $n$-dimensional interval in $\bR^n$, namely the product of $n$ open intervals, once a nonempty subset $\Gamma\subsetneq\{1,\ldots,n\}$ is fixed, we denote $Q_\Gamma=\pi_\Gamma(Q)$ and $\hat{Q}_\Gamma=\hat{\pi}_\Gamma(Q)$. Finally, for a function $u\colon Q\to\bR$ and a point $z\in\hat{Q}_\Gamma$, the section $u^\zr\colon Q_\Gamma\to\bR$ is given by $u^\zr(y)=u(z+y)$ for each $y\in Q_\Gamma$. We utilize the following fact about Sobolev mappings (see, for instance, \cite[Proposition~2.2]{MMM}).
\begin{lemma}\label{slices}
Let $n\geq2$ be an integer, $p\geq1$, $\emptyset\neq\Gamma\subsetneq\{1,\ldots,n\}$ and $Q$ be an open $n$-dimensional interval. Assume $u\in W^{1,p}(Q)$. Then for almost every $z\in\hat{Q}_\Gamma$, we have $u^\zr\in W^{1,p}(Q_\Gamma)$ and $\partial_ku^\zr=(\partial_ku)^\zr$ almost everywhere in $Q_\Gamma$, where $k\in\Gamma$.
\end{lemma}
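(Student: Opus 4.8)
The plan is to reduce the statement to the case of smooth mappings and then transfer it to almost every slice by Fubini's theorem. First I would use the Meyers--Serrin theorem to pick a sequence $u_j\in C^\infty(Q)\cap W^{1,p}(Q)$ with $u_j\to u$ in $W^{1,p}(Q)$. For a \emph{smooth} $v$ the desired identity is classical: writing $\bR^n=\spa\{e_j\colon j\in\Gamma\}\oplus\spa\{e_j\colon j\in\Gamma\}^\bot$ and noting that $z+y\in Q$ precisely when $z\in\hat{Q}_\Gamma$ and $y\in Q_\Gamma$, one has $v^\zr\in C^\infty(Q_\Gamma)$ and $\partial_kv^\zr=(\partial_kv)^\zr$ pointwise for $k\in\Gamma$, because adding to the variable the constant $z$, which has no components in the directions indexed by $\Gamma$, only translates. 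So the whole task is to pass this identity to the limit on slices.

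From $u_j\to u$ in $W^{1,p}(Q)$, Fubini's theorem applied to $u$, to the $u_j$ and to the differences gives
\[
\int_{\hat{Q}_\Gamma}\Big(\|u_j^\zr-u^\zr\|_{L^p(Q_\Gamma)}^p+\sum_{k\in\Gamma}\|(\partial_ku_j)^\zr-(\partial_ku)^\zr\|_{L^p(Q_\Gamma)}^p\Big)\,dz\longrightarrow0 .
\]
Extracting a subsequence once and for all, I then obtain a full measure set $N\subset\hat{Q}_\Gamma$ such that for every $z\in N$ the slices $u^\zr$ and $(\partial_ku)^\zr$ lie in $L^p(Q_\Gamma)$ and $u_j^\zr\to u^\zr$, $(\partial_ku_j)^\zr\to(\partial_ku)^\zr$ in $L^p(Q_\Gamma)$ for every $k\in\Gamma$. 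Fixing $z\in N$, $k\in\Gamma$ and $\varphi\in C_c^\infty(Q_\Gamma)$, the classical integration by parts on $Q_\Gamma$ for the smooth slice $u_j^\zr$ (no boundary term, since $\varphi$ is compactly supported) reads $\int_{Q_\Gamma}u_j^\zr\,\partial_k\varphi\,dy=-\int_{Q_\Gamma}(\partial_ku_j)^\zr\,\varphi\,dy$; letting $j\to\infty$ and pairing the two $L^p$ limits against $\partial_k\varphi,\varphi\in C_c^\infty(Q_\Gamma)$ yields $\int_{Q_\Gamma}u^\zr\,\partial_k\varphi\,dy=-\int_{Q_\Gamma}(\partial_ku)^\zr\,\varphi\,dy$. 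Since this holds for all such $\varphi$ and all $k\in\Gamma$, and the coordinate directions of $Q_\Gamma$ are exactly those indexed by $\Gamma$, the functions $(\partial_ku)^\zr$ are the weak partial derivatives of $u^\zr$; as they and $u^\zr$ are in $L^p(Q_\Gamma)$, this gives $u^\zr\in W^{1,p}(Q_\Gamma)$ with $\partial_ku^\zr=(\partial_ku)^\zr$ a.e., for every $z\in N$.

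No deep obstacle is expected here: this is the standard Fubini-type restriction theorem for Sobolev functions. The only point needing care is the order of the subsequence extraction, so that a single null set outside $N$ serves simultaneously for the $L^p$-convergence of the slices of the $u_j$ and of all the $\partial_ku_j$; once this is arranged, the limit in the integration-by-parts identity is taken for all test functions at once and no further $\varphi$-dependent subsequence is needed. An essentially equivalent route would be to observe that $z\mapsto u_j^\zr$ is Cauchy in $L^p\big(\hat{Q}_\Gamma;W^{1,p}(Q_\Gamma)\big)$ by Fubini, invoke completeness of $W^{1,p}(Q_\Gamma)$ to extract an a.e.\ limit in $W^{1,p}(Q_\Gamma)$, and identify it with $u^\zr$ through the $L^p$ convergence.
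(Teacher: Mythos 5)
Your proof is correct. The paper does not prove Lemma~\ref{slices} at all --- it simply quotes it from \cite[Proposition~2.2]{MMM} --- and your argument (Meyers--Serrin smoothing, Fubini on the product decomposition $Q=Q_\Gamma\times\hat{Q}_\Gamma$ to extract a single subsequence with $L^p$-convergence of $u_j^\zr$ and $(\partial_ku_j)^\zr$ on almost every slice, then passage to the limit in the integration-by-parts identity) is exactly the standard proof of this restriction theorem, with the one delicate point --- extracting the subsequence once so that a single null set works for all test functions --- handled correctly.
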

\begin{proof}[Proof of Theorem~\ref{th1}]
By the assumption, we have
\begin{equation}\label{assumption}
0=f^*\eta=\sum_{J\in I_{k,m}}\eta_J\circ fdf_J
\end{equation}
almost everywhere in $\Omega$. Due to Theorem~\ref{k=n-1}, we are left with the case $k<n-1$. Without the loss of generality we may assume that $\Omega=Q$ is an open interval. Let us fix $I=(i_1,\ldots,i_{k+1})\in I_{k+1,n}$ and $\Gamma=\{i_1,\ldots,i_{k+1}\}$. Using Fubini's theorem and~\eqref{assumption}, we deduce that for $\mathcal H^{n-k-1}$-almost every $z\in\hat{Q}_\Gamma$ and for each $q\in\{1,\ldots,k+1\}$, denoting $I_q=(i_1,\ldots,\widehat{i_q},\ldots,i_{k+1})\in I_{k,n}$, we have
$$
\sum_{J\in I_{k,m}}\left(\eta_J\circ f\right)^\zr\left(\frac{\partial f_J}{\partial y_{I_q}}\right)^\zr=0
$$
almost everywhere in $Q_\Gamma$. Then by Lemma~\ref{slices}, we observe
$$
\sum_{J\in I_{k,m}}\eta_J\circ f^\zr\frac{\partial f^\zr_J}{\partial y_{I_q}}=0
$$
for each $q\in\{1,\ldots,k+1\}$ almost everywhere in $Q_\Gamma$ for $\mathcal H^{n-k-1}$-almost every $z\in\hat{Q}_\Gamma$; and $f^\zr\in W^{1,p}(Q_\Gamma;\bR^m)$ for these $z$. That is,
$$
0=\sum_{J\in I_{k,m}}\eta_J\circ f^\zr df_J^\zr=\left(f^\zr\right)^*\eta
$$
almost everywhere in $Q_\Gamma$ for these $z$. Applying Theorem~\ref{k=n-1} for these $z$ gives 
$$
\left(f^\zr\right)^*(d\eta)=\sum_{J\in I_{k,m}}\sum_{j=1}^m\frac{\partial\eta_J}{\partial x_j}\circ f^\zr df^\zr_j\wedge df_J^\zr=0
$$
almost everywhere in $Q_\Gamma$. Thus, we have
\begin{equation*}
0=\sum_{J\in I_{k,m}}\sum_{j=1}^m\frac{\partial\eta_J}{\partial x_j}\circ f^\zr\frac{\partial(f_j^\zr,f^\zr_J)}{\partial y_I}
=\sum_{J\in I_{k,m}}\sum_{j=1}^m\left(\frac{\partial\eta_J}{\partial x_j}\circ f\,\frac{\partial(f_j,f_J)}{\partial y_I}\right)^\zr
\end{equation*}
almost everywhere in $Q_\Gamma$ for $\mathcal H^{n-k-1}$-almost every $z\in\hat{Q}_\Gamma$. Fubini theorem implies
$$
\sum_{J\in I_{k,m}}\sum_{j=1}^m\frac{\partial\eta_J}{\partial x_j}\circ f\,\frac{\partial(f_j,f_J)}{\partial y_I}=0
$$
almost everywhere in $Q$. Finally, the arbitrariness of the choice of $I$ yields
$$
0=\sum_{J\in I_{k,m}}\sum_{j=1}^m\frac{\partial\eta_J}{\partial x_j}\circ f\,df_j\wedge df_J=f^*(d\eta)
\quad \mbox{ a.e.\ in $Q$.}
$$
This concludes the proof.
\end{proof}

\section{Hypersurfaces in stratified nilpotent Lie groups}\label{HypSect}

In this section we give the proof of Theorem~\ref{main}. The following algebraic lemma will play an important role.

\begin{lemma}\label{algebra}
Let $\xi_1,\ldots,\xi_{q-1}\in V$ be vectors in a $q$-dimensional linear space $V$ with $q\geq3$. Assume that $X_1,\ldots,X_{q}$ form the basis of $V$ and that $\eta_1,\ldots,\eta_q$ is its dual basis of $V^*$. Then $X_s\in\Xi:=\spa\{\xi_1,\ldots,\xi_{q-1}\}$ implies $\eta_1\wedge\ldots\wedge\widehat{\eta}_s\wedge\ldots\wedge\eta_q(\xi_1\wedge\ldots\wedge\xi_{q-1})=0$ for $s\in\{1,\ldots,q\}$. Conversely, if the vectors $\xi_1,\ldots,\xi_{q-1}$ are linearly independent, we have $X_s\in\Xi$, whenever $\eta_1\wedge\ldots\wedge\widehat{\eta}_s\wedge\ldots\wedge\eta_q(\xi_1\wedge\ldots\wedge\xi_{q-1})=0$.
\end{lemma}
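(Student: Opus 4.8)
The plan is to prove both directions directly using the defining property of a dual basis, namely $\eta_i(X_j)=\delta_{ij}$. The underlying observation is that the coefficients of the $(q-1)$-vector $\xi_1\wedge\cdots\wedge\xi_{q-1}$, expanded in the basis $\{X_{j_1}\wedge\cdots\wedge X_{j_{q-1}}\colon (j_1,\ldots,j_{q-1})\in I_{q-1,q}\}$ of $\Lambda^{q-1}V$, are exactly read off by the dual basis $(q-1)$-forms $\eta_1\wedge\cdots\wedge\widehat{\eta}_s\wedge\cdots\wedge\eta_q$: indeed, by multilinearity and antisymmetry, $\eta_1\wedge\cdots\wedge\widehat{\eta}_s\wedge\cdots\wedge\eta_q(X_{j_1}\wedge\cdots\wedge X_{j_{q-1}})$ equals $\pm1$ when $\{j_1,\ldots,j_{q-1}\}=\{1,\ldots,q\}\setminus\{s\}$ and $0$ otherwise. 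Hence, writing $\xi_1\wedge\cdots\wedge\xi_{q-1}=\sum_{s=1}^q c_s\,(X_1\wedge\cdots\wedge\widehat{X}_s\wedge\cdots\wedge X_q)$ for suitable scalars $c_s$, we get $\eta_1\wedge\cdots\wedge\widehat{\eta}_s\wedge\cdots\wedge\eta_q(\xi_1\wedge\cdots\wedge\xi_{q-1})=\pm c_s$.

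For the first implication, suppose $X_s\in\Xi$. Then $\xi_1,\ldots,\xi_{q-1}$ all lie in a subspace $\Xi$ containing $X_s$; in particular $X_s\wedge\xi_1\wedge\cdots\wedge\xi_{q-1}=0$ since it is a $q$-vector of $q-1+1$ vectors from the at-most-$(q-1)$-dimensional space $\Xi$ — more precisely, $\xi_1\wedge\cdots\wedge\xi_{q-1}$ is either zero or a generator of $\Lambda^{q-1}\Xi$, and wedging with $X_s\in\Xi$ kills it. Now pair $X_s\wedge\xi_1\wedge\cdots\wedge\xi_{q-1}=0$ with the top form $\eta_1\wedge\cdots\wedge\eta_q\in\Lambda^q V^*$: using $\eta_1\wedge\cdots\wedge\eta_q(X_s\wedge\omega)=\pm\,(\eta_1\wedge\cdots\wedge\widehat{\eta}_s\wedge\cdots\wedge\eta_q)(\omega)$ for any $(q-1)$-vector $\omega$ (contraction with $\eta_1\wedge\cdots\wedge\eta_q$ against $X_s$), we conclude $(\eta_1\wedge\cdots\wedge\widehat{\eta}_s\wedge\cdots\wedge\eta_q)(\xi_1\wedge\cdots\wedge\xi_{q-1})=0$, which is the claim.

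For the converse, assume $\xi_1,\ldots,\xi_{q-1}$ are linearly independent and that the stated pairing vanishes, i.e. $c_s=0$ in the expansion above. Since $\Xi$ is $(q-1)$-dimensional, $\xi_1\wedge\cdots\wedge\xi_{q-1}$ is a nonzero generator of the line $\Lambda^{q-1}\Xi\subset\Lambda^{q-1}V$. A vector $v\in V$ lies in $\Xi$ if and only if $v\wedge(\xi_1\wedge\cdots\wedge\xi_{q-1})=0$ in $\Lambda^q V$; applying this with $v=X_s$ and pairing against $\eta_1\wedge\cdots\wedge\eta_q$ as before, $X_s\wedge(\xi_1\wedge\cdots\wedge\xi_{q-1})$ is a multiple of $X_1\wedge\cdots\wedge X_q$ whose coefficient is, up to sign, exactly $c_s=(\eta_1\wedge\cdots\wedge\widehat{\eta}_s\wedge\cdots\wedge\eta_q)(\xi_1\wedge\cdots\wedge\xi_{q-1})=0$. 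Hence $X_s\wedge\xi_1\wedge\cdots\wedge\xi_{q-1}=0$, so $X_s\in\Xi$.

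The only mild subtlety — and the step I would be most careful about — is the sign bookkeeping in the identities $\eta_1\wedge\cdots\wedge\eta_q(X_s\wedge\omega)=(-1)^{s-1}(\eta_1\wedge\cdots\wedge\widehat{\eta}_s\wedge\cdots\wedge\eta_q)(\omega)$ and its vector analogue, but since all our conclusions are vanishing statements, the signs are irrelevant and can be suppressed. No genuine obstacle is expected; this is purely multilinear algebra on the top and next-to-top exterior powers.
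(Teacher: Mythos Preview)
Your proof is correct. Both directions are handled cleanly via the single identity
\[
(\eta_1\wedge\cdots\wedge\eta_q)(X_s\wedge\omega)=(-1)^{s-1}(\eta_1\wedge\cdots\wedge\widehat{\eta}_s\wedge\cdots\wedge\eta_q)(\omega),
\]
together with the standard criterion that, for independent $\xi_1,\ldots,\xi_{q-1}$, a vector $v$ lies in $\Xi$ if and only if $v\wedge\xi_1\wedge\cdots\wedge\xi_{q-1}=0$. As you note, signs are immaterial since every conclusion is a vanishing statement.

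The paper's argument differs in spirit, particularly for the converse. There the authors expand $\xi_j=\sum_i\alpha_i^jX_i$, observe that the pairing in question equals the $(q-1)\times(q-1)$ minor $\det[\alpha_i^j]_{i\neq s}$, and then argue from the vanishing of this determinant that some column is a combination of the others; tracing this back shows $\Xi$ is contained in a $(q-1)$-dimensional space of the form $\spa\{X_s,\,X_i+b_iX_k:i\neq k,s\}$, forcing $X_s\in\Xi$ by dimension count. Your route is more coordinate-free and symmetric between the two implications, exploiting directly that $\Lambda^qV$ is one-dimensional; the paper's route is more hands-on, unpacking the same fact into a determinant computation. Both are elementary, but yours is shorter and avoids the slightly ad hoc column-dependence step.
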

\begin{proof}
If $X_s\in\Xi$ and the vectors $\xi_1,\ldots,\xi_{q-1}$ are linearly dependent, the claim is trivial, because $\xi_1\wedge\ldots\wedge\xi_{q-1}=0$. Otherwise, we have $\xi_1\wedge\ldots\wedge\xi_{q-1}=\alpha X_s\wedge Z_1\wedge\ldots\wedge Z_{q-2}$, where $\alpha\in\bR$ and $Z_1,\ldots,Z_{q-2}\in V$ are such that the vectors $X_s,Z_1,\ldots,Z_{q-2}$ form the basis of $\Xi$. As a result, we have
$$
\eta_{1}\wedge\ldots\wedge\widehat{\eta}_s\wedge\ldots\wedge\eta_{q}\left(X_s\wedge Z_1\wedge\ldots\wedge Z_{q-2}\right)=0,
$$
proving the direct implication.

In order to prove the other implication, we consider the expansion of the vectors $\xi_1,\ldots,\xi_{q-1}$ in terms of the basis $X_1,\ldots,X_{q}$:
\begin{equation*}
\xi_j=\sum_{i=1}^q\alpha_i^jX_i
\end{equation*}
for $j\in\{1,\ldots,q-1\}$ and some $\alpha_i^j\in\bR$. Then
\begin{align*}
\eta_1\wedge\ldots\wedge\widehat{\eta}_s\wedge&\ldots\wedge\eta_q(\xi_1\wedge\ldots\wedge\xi_{q-1})\\=&
\eta_1\wedge\ldots\wedge\widehat{\eta}_s\wedge\ldots\wedge\eta_q
\left(\sum_{r=1}^q\det\left[\alpha^j_{i}\right]_{j=1,\ldots,q-1}^{i=1,\ldots,\hat{r},\ldots,q}
X_{1}\wedge\ldots\wedge\widehat{X}_r\wedge\ldots\wedge X_{q}\right)\\=&\det\left[\alpha^j_{i}\right]_{j=1,\ldots,q-1}^{i=1,\ldots,\hat{s},\ldots,q}
\end{align*}
by the duality. Thus if $\eta_1\wedge\ldots\wedge\widehat{\eta}_s\wedge\ldots\wedge\eta_q(\xi_1\wedge\ldots\wedge\xi_{q-1})=0$, the determinant on the right-hand side of the last equation is equal to zero. Hence, one of the columns of the matrix is a linear combination of the other $q-2$. More precisely, there exist $k\in\{1,\ldots,q\}\setminus\{s\}$ and $b_i\in\bR$ with $i\in\{1,\ldots,q\}\setminus\{s,k\}$, such that $$\alpha^j_k=\sum_{\substack{i=1\\i\neq k,s}}^qb_i\alpha^j_i$$ for each $j\in\{1,\ldots,q-1\}$. Therefore,
\begin{align*}
\xi_j=\alpha^j_sX_s+\left(\sum_{\substack{i=1\\i\neq k,s}}^qb_i\alpha^j_i\right)X_k+\sum_{\substack{i=1\\i\neq k,s}}^q\alpha^j_iX_i
=\alpha^j_sX_s+\sum_{\substack{i=1\\i\neq k,s}}^q\alpha^j_i\left(X_i+b_iX_k\right),
\end{align*}
which yields $\Xi\subset\spa\left\{X_s,X_i+b_iX_k\colon i\in\{1,\ldots,q\}\setminus\{k,s\}\right\}$. Thus, the assumption $\dim\Xi=q-1$ implies that $X_s\in\Xi$.
\end{proof}

We introduce a few basic facts concerning stratified nilpotent Lie groups.
A stratified nilpotent Lie group $\G$ can be seen as a linear space equipped with a Lie group operation and a corresponding Lie algebra $\cg=V_1\oplus\cdots\oplus V_\iota$, where $\iota$ is the step of the group and the conditions
\[
[V_1,V_j]=V_{j+1} \quad \mbox{and} \quad [V_1,V_\iota]=\{0\}
\]
hold for each $j=1,\ldots,\iota-1$. We denote by $q$ the dimension of $\G$ as a linear space.
The Lie group operation is provided by the Baker-Campbell-Hausdorff formula, by equipping $\G$ also with the structure of Lie algebra.
Then the unit element is given by the origin of the linear space $\G$. The Lie algebra $\cg$ also defines a grading of on the group itself, setting
\[
H_j=\{X(0): X\in V_j\}\subset T_0\G
\]
the natural identification of $\G$ with $T_0\G$ gives
\[
\G=H_1\oplus H_2\oplus\cdots\oplus H_\iota.
\]
This grading of the space transfers to each point fiber of $T\G$ by left translations.
At each point $z\in\G$, for each $j=1,\ldots,\iota$, we set
\beq\label{eq:Hj}
(H_j)_z=(dl_z)_0(H_j),
\eeq
where $l_z:\G\to\G$ is the left translation $x\to zx$.
For the proof of Theorem~\ref{main}, it will be convenient to introduce the integers
$$
m_k=\sum_{j=1}^k\dim V_j
$$
for $k\in\{1,\ldots,\iota\}$. For each $i\in\{1,\ldots,q\}$, $d_i\in\{1,\ldots,\iota\}$ stands for its degree, the unique integer satisfying $m_{d_i-1}<i\leq m_{d_i}$, where we set $m_0=0$.

We fix a graded basis $X_1,\ldots,X_q$ of $\mathfrak{g}$, namely
\[
X_{m_k+1},X_{m_k+2},\ldots, X_{m_{k+1}}
\]
is a basis of $V_{k+1}$ for every $k=0,1,\ldots,\iota-1$, along with its dual basis of left invariant forms $\eta_1,\ldots,\eta_q$ in $\cg^*$.
This basis is characterized by the property
\[
\eta_r(X_s)=\delta_{r,s}=\left\{\begin{array}{ll} 1 & \mbox{if $r=s$} \\
0 & \mbox{otherwise}
\end{array}\right..
\]
The graded basis $\{X_i\}_{1\le i\le q}$ also allows us to identify $\G$ with $\R^q$ by defining $X_i(0)=e_i$ for all $i$ and setting the corresponding basis $(e_1,\ldots,e_q)$ of the linear space $\G$. This equips $\G$ with an auxiliary scalar product that makes the basis $(e_1,\ldots,e_n)$ orthonormal.

The Lie algebra structure yields the coefficients $c_{j,i}^k$ such that
\begin{equation}\label{comm}
[X_i,X_j]=\sum_{k=1}^q c_{i,j}^k X_k
\end{equation}
where $i,j,k\in\{1,\ldots,q\}$. Then the Maurer-Cartan equations are given by
$$
d\eta_k=\sum_{1\leq j<i\leq q}c_{j,i}^k\eta_i\wedge\eta_j
$$
for each $k=1,\ldots,q$, see for instance \cite{warner}.
The left invariance of the dual basis yields
\begin{equation}\label{struct}
d\eta_k(X_i\wedge X_j)=\eta_k([X_j,X_i])=c_{j,i}^k.
\end{equation}
On the other hand, since $[V_{d_i},V_{d_j}]\subset V_{d_i+d_j}$ for all $j,i\in\{1,\ldots,q\}$, \eqref{comm} implies $c_{i,j}^k=0$ whenever $d_i+d_j\neq d_k$, hence the Maurer-Cartan equations take the form
\begin{equation}\label{differential}
d\eta_k=\sum_{\substack{1\leq j<i\leq q\\d_i<d_k}}c_{j,i}^k\eta_i\wedge\eta_j
\quad \mbox{for each $k=1,\ldots,q$.}
\end{equation}
\begin{theorem}\label{main}
Let $\R^q=H_1\oplus\cdots\cdots\oplus H_\iota$ be equipped with the structure of noncommutative stratified group, with Lie algebra $\mathfrak{g}=V_1\oplus\ldots\oplus V_\iota$, for some $\iota\ge2$. Fix $m_1=\dim V_1$ and $p>q-m_1$ if $q>m_1+1$ or $p=q-m_1=1$ if $q=m_1+1$. Suppose that $f\in W_\loc^{1,p}(\Omega,\bR^q)$, where $\Omega\subset\bR^{q-1}$ is an open set. If $m_1<q-1$, we assume that
\begin{equation}\label{horiz}
(H_1)_{f(y)}\subset df_y(\bR^{q-1}_y)
\end{equation}
for almost every $y\in\Omega$; otherwise, we require
\begin{equation}\label{horiz1}
df_y(\bR^{q-1}_y)\subset (H_1)_{f(y)}
\end{equation}
for almost every $y\in\Omega$. Then there exists a set of positive measure $A\subset\Omega$, such that $\rk\, df<q-1$ in $A$. Moreover, if the group has step $2$, then the set $A$ can be chosen to have full measure in $\Omega$.
\end{theorem}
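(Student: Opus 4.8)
The plan is to argue by contradiction, assuming that $\rk\, df = q-1$ almost everywhere in $\Omega$, and to derive a contradiction by iterating the exterior differentiation result, Theorem~\ref{th1}. I would first handle the codimension-one case $m_1 = q-1$, where assumption~\eqref{horiz1} combined with the maximality of the rank forces $df_y(\bR^{q-1}_y) = (H_1)_{f(y)}$ for a.e.\ $y$, hence $f^*\eta_q = 0$ a.e.\ (since $\eta_q$ annihilates the horizontal fiber). Applying Theorem~\ref{th1} with $k=1$ and $p=1$ gives $f^*(d\eta_q) = 0$ a.e.; but by~\eqref{differential}, $d\eta_q = \sum c^q_{j,i}\,\eta_i\wedge\eta_j$ with the sum over pairs of degree-$1$ indices, and noncommutativity of a step-$2$ piece means some $c^q_{j,i}\neq 0$. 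Pulling back, $f^*(d\eta_q) = \sum c^q_{j,i}\, df_i\wedge df_j$; the point is that this $2$-form must vanish on every bivector in the image of $\Lambda^2 df_y$, which is all of $\Lambda^2 (H_1)_{f(y)}$, contradicting the fact that $d\eta_q$ restricted to $H_1$ is a nonzero $2$-form (by the bracket-generating condition $[V_1,V_1]=V_2 \ne \{0\}$ when $\iota \geq 2$ and $m_1 = q-1$ forces $\iota = 2$). Here the algebraic Lemma~\ref{algebra} is what translates "image of $df_y$ equals $H_1$" into the vanishing of the pullback of a wedge of $\eta$'s.

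For the general case $m_1 < q-1$ with assumption~\eqref{horiz}, I would introduce the quantity $\kappa(y)$ defined as the largest integer $k$ such that $(H_1)_{f(y)}\oplus\cdots\oplus(H_k)_{f(y)} \subset df_y(\bR^{q-1}_y)$; assumption~\eqref{horiz} guarantees $\kappa(y)\ge 1$ a.e., and since $\dim df_y(\bR^{q-1}_y) = q-1 < q$ we have $\kappa(y) \leq \iota$ (in fact $\kappa(y)<\iota$ unless a drop already occurs). By a measurability/pigeonhole argument, there is a positive-measure set $E$ on which $\kappa(y) = k_0$ is constant, with $k_0$ chosen minimal among values attained on positive-measure sets; restrict attention to $E$. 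On $E$, the span $W_y := (H_1)_{f(y)}\oplus\cdots\oplus(H_{k_0})_{f(y)}$ of dimension $m_{k_0}$ sits inside $df_y(\bR^{q-1}_y)$ but $(H_{k_0+1})_{f(y)}$ does not. Using Lemma~\ref{algebra} applied to the basis $X_1,\dots,X_q$ and its dual $\eta_1,\dots,\eta_q$: the condition $X_s \in df_y(\bR^{q-1}_y)$ for all $s \le m_{k_0}$ (and more) should be encoded as the vanishing of $f^*(\eta_{m_{k_0}+1}\wedge\cdots\wedge\eta_q)$, a single $(q - m_{k_0})$-form. Since $q - m_{k_0} \le q - m_1$ and our Sobolev exponent satisfies $p > q - m_1 \ge q - m_{k_0}$, Theorem~\ref{th1} applies and yields $f^*\bigl(d(\eta_{m_{k_0}+1}\wedge\cdots\wedge\eta_q)\bigr) = 0$ a.e.\ on $E$.

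The heart of the argument, and the step I expect to be the main obstacle, is extracting from this differentiated identity the conclusion that $(H_{k_0+1})_{f(y)} \subset df_y(\bR^{q-1}_y)$ a.e.\ on $E$ — which contradicts the definition of $k_0$ as the exact number of strata contained in the tangent image. One computes $d(\eta_{m_{k_0}+1}\wedge\cdots\wedge\eta_q) = \sum_{\ell > m_{k_0}} \pm\, \eta_{m_{k_0}+1}\wedge\cdots\wedge d\eta_\ell \wedge\cdots\wedge\eta_q$, and by~\eqref{differential} each $d\eta_\ell$ only involves $\eta_i\wedge\eta_j$ with $d_i, d_j < d_\ell$, so a wedge term survives only when it reaches down into a stratum of degree $\le k_0$. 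The structure equations of a stratified group (the fact that $V_1$ brackets onto each $V_{j+1}$) force enough of these coefficients to be nonzero that the resulting pullback $(q-m_{k_0})$-form, evaluated against the image $(q-1)$-dimensional subspace $df_y(\bR^{q-1}_y)$, can only vanish if that subspace contains $(H_{k_0+1})_{f(y)}$; again Lemma~\ref{algebra} (in the form handling wedges of covectors) is the right bookkeeping device, after reducing to a suitable complement of $W_y$. This inductive/combinatorial step must be carried out carefully because the Maurer--Cartan coefficients $c^k_{i,j}$ are not all nonzero; the argument relies on the graded structure and bracket-generation, not on any genericity. In the step-$2$ case ($\iota = 2$), $k_0 = 1$ is the only possibility with $\kappa \ge 1$ and $\kappa < \iota$, the differentiation is performed exactly once, and the contradiction shows the rank drops on all of $\Omega$ up to null sets, giving the final "full measure" assertion; for higher step the contradiction on $E$ alone suffices to produce the positive-measure set $A$ where $\rk\, df < q-1$.
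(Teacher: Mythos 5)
Your overall architecture matches the paper's: argue by contradiction from $\rk df=q-1$ a.e., encode the inclusion of the first $\kappa$ strata as the single vanishing condition $f^*(\eta_{m_\kappa+1}\wedge\cdots\wedge\eta_q)=0$, differentiate once via Theorem~\ref{th1} (the degree $q-m_\kappa\le q-m_1<p$ is exactly why the regularity suffices), and use Lemma~\ref{algebra} to translate back into an inclusion of the next stratum; your ``minimal $k_0$'' organization is just the contrapositive of the paper's upward induction. But the step you yourself flag as ``the main obstacle'' is in fact the main content of the paper's proof, and your proposal does not supply it. From $f^*\bigl(d(\eta_{m_{\kappa}+1}\wedge\cdots\wedge\eta_q)\bigr)=0$ one does \emph{not} directly read off that $(H_{\kappa+1})_{f(y)}\subset df_y(\bR^{q-1}_y)$: the differentiated form is a sum $\sum_r \pm\, d\eta_r\wedge(\eta_{m_\kappa+1}\wedge\cdots\wedge\widehat{\eta}_r\wedge\cdots\wedge\eta_q)$ in which the contributions of the various $r$ with $d_r>\kappa$ are entangled, and the structure constants $c^r_{j,i}$ may vanish in uncontrollable patterns. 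The paper's resolution is to expand each $X_s$ with $d_s=\kappa+1$ as $X_s=\sum\gamma^s_{k,l}[X_k,X_l]$ with $d_k=1$, $d_l=\kappa$ (possible precisely because $V_{\kappa+1}=[V_1,V_\kappa]$), and to build from these coefficients an auxiliary $(m_\kappa-2)$-form $\theta_s$ satisfying $\theta_s\wedge d\eta_r\wedge\eta_{m_\kappa+1}\wedge\cdots\wedge\eta_{m_{d_r-1}}=\delta_{r,s}\,\eta_1\wedge\cdots\wedge\eta_{m_{d_r-1}}$. Wedging the differentiated identity with $f^*\theta_s$ then isolates $f^*(\eta_1\wedge\cdots\wedge\widehat{\eta}_s\wedge\cdots\wedge\eta_q)=0$ for each single $s$ of degree $\kappa+1$, after which the converse direction of Lemma~\ref{algebra} (which is where the full-rank hypothesis enters) yields $X_s(f(y))\in df_y(\bR^{q-1}_y)$. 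Without this ``delta-function'' device, or an equivalent one, your assertion that the vanishing ``can only'' happen if $(H_{\kappa+1})_{f(y)}$ is contained in the image is unsubstantiated; this is the genuine gap.

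Two smaller points. First, Theorem~\ref{th1} needs its hypothesis a.e.\ in the \emph{open} set $\Omega$, not on a measurable set $E$; your choice of $k_0$ minimal rescues you, since then $\kappa(y)\ge k_0$ a.e.\ in $\Omega$ and the vanishing holds on all of $\Omega$, but as written (``a.e.\ on $E$'') you appear to apply the theorem on a non-open set. Second, for the step-$2$ full-measure assertion, a contradiction launched from ``$\rk df=q-1$ a.e.'' only yields a rank drop on a positive-measure set; the paper instead shows directly, without any contradiction, that every basis covector $\eta_1\wedge\cdots\wedge\widehat{\eta}_s\wedge\cdots\wedge\eta_q$ annihilates $df_y(\partial_1)\wedge\cdots\wedge df_y(\partial_{q-1})$ a.e.\ (using only the forward, rank-free direction of Lemma~\ref{algebra} for $d_s\le1$ and the differentiation step for $d_s=2$), so the $(q-1)$-vector itself vanishes almost everywhere.
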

\begin{proof}
\textbf{Step 1.} ({\em Reformulating of the horizontality condition})

Assume first that $m_1<q-1$. Fix some $\kappa\in\{1,\ldots,\iota-1\}$. Suppose $y\in\Omega$ is such that $(H_1)_{f(y)},\ldots,(H_\kappa)_{f(y)}\subset df_y(\bR^{q-1}_y)$, which in the case $\kappa=1$ is equivalent to~\eqref{horiz}. We show that for this $y$ we have
\begin{equation}\label{ref1}
\left(f^*(\eta_{m_\kappa+1}\wedge\ldots\wedge\eta_q)\right)_y=0.
\end{equation}
Indeed, if necessary, we complete the collection $\{X_1(f(y)),\ldots,X_{m_\kappa}(f(y))\}$ with the vectors $Z^1_y,\ldots,Z^p_y\in\bR^q_{f(y)}$ to obtain the basis $\{X_1(f(y)),\ldots,X_{m_\kappa}(f(y)),Z^1_y,\ldots,Z^p_y\}$ of $df_y(\bR^{q-1}_y)$. The number $p$ is at most $q-1-m_\kappa\geq0$. Thus, any element of the corresponding basis of $\Lambda_{q-m_\kappa}(df_y(\bR^{q-1}_y))$ will contain at least one $X_i(f(y))$ for some $i\in\{1,\ldots,m_\kappa\}$. Therefore, $(\eta_{m_\kappa+1}\wedge\ldots\wedge\eta_q)_{f(y)}(\tau)=0$ for each $\tau\in\Lambda_{q-m_\kappa}(df_y(\bR^{q-1}_y))$ by the duality of $X_1,\ldots,X_{q}$ and $\eta_1,\ldots,\eta_q$. This implies~\eqref{ref1}.

In the case $m_1=q-1$ and $y\in\Omega$ satisfies~\eqref{horiz1}, the duality yields $(\eta_q)_{f(y)}(\tau)=0$ for each $\tau\in df_y(\bR^{q-1}_y)$. Thus, $(f^*\eta_q)_y=0$, which is equivalent to~\eqref{ref1}, when $\kappa=1$ and $m_\kappa+1=q$. 

\textbf{Step 2.} ({\em Induction step})

Let us assume that $f$ satisfies
\begin{equation}\label{ref2}
f^*(\eta_{m_\kappa+1}\wedge\ldots\wedge\eta_q)=0
\end{equation}
almost everywhere in $\Omega$ for some $\kappa\in\{1,\ldots,\iota-1\}$. In this step, we show that this assumption implies
\begin{equation*}
f^*(\eta_{1}\wedge\ldots\wedge\widehat{\eta}_s\wedge\ldots\wedge\eta_{q})=0
\end{equation*}
almost everywhere, for each $s$ with $d_s=\kappa+1$.

Let us fix one $s$ with $d_s=\kappa+1$. The condition $V_{\kappa+1}=[V_1,V_\kappa]$ means that $X_s\in V_{\kappa+1}$ may be represented as
\begin{align}\label{key}
X_s=\sum_{\substack{d_k=1,d_l=\kappa\\k<l}}\gamma_{k,l}^s[X_k,X_l]
\end{align}
for some $\gamma_{k,l}^s\in\bR$. Therefore, for each $r$ with $d_r>\kappa$, we have
\begin{equation}\label{delta}
\delta_{s,r}=\eta_r(X_s)=\sum_{\substack{d_k=1,d_l=\kappa\\k<l}}\gamma_{k,l}^s\eta_r\left([X_k,X_l]\right)
=\sum_{\substack{d_k=1,d_l=\kappa\\k<l}}\gamma_{k,l}^sc_{k,l}^r
\end{equation}
by~\eqref{struct}.
On the other hand, using~\eqref{differential}, we notice
\begin{align*}
d\eta_r\wedge\eta_{m_\kappa+1}\wedge\ldots\wedge\eta_{m_{d_r-1}}=&\sum_{\substack{1\leq j<i\leq q\\d_i<d_r}}c_{j,i}^r\eta_i\wedge\eta_j
\wedge\eta_{m_\kappa+1}\wedge\ldots\wedge\eta_{m_{d_r-1}}\\=&\sum_{\substack{1\leq j<i\leq m_\kappa}}c_{j,i}^r\eta_i\wedge\eta_j
\wedge\eta_{m_\kappa+1}\wedge\ldots\wedge\eta_{m_{d_r-1}}
\end{align*}
for each $r$ with $d_r>\kappa$ (if $d_r=\kappa+1$, the exterior product by $\eta_{m_\kappa+1}\wedge\ldots\wedge\eta_{m_{d_r-1}}$ is understood as the product by the scalar 1). We consider the form
$$
\theta_s=\sum_{\substack{d_k=1,d_l=\kappa\\k<l}}(-1)^{h(m_\kappa,k,l)}\gamma^s_{k,l}
\eta_1\wedge\ldots\wedge\widehat{\eta}_k\wedge\ldots\wedge\widehat{\eta}_l\wedge\dots\wedge\eta_{m_\kappa},
$$
where $\gamma^s_{k,l}$ are the coefficients from~\eqref{key} and the exponents $h(m_\kappa,k,l)\in\bN$ are chosen so that
$$
(-1)^{h(m_\kappa,k,l)}\eta_1\wedge\ldots\wedge\widehat{\eta}_k\wedge\ldots\wedge\widehat{\eta}_l\wedge\dots\wedge\eta_{m_\kappa}\wedge\eta_l\wedge\eta_k=
\eta_1\wedge\ldots\wedge\eta_{m_\kappa}.
$$
Notice that in the case $\kappa=1$ and $m_1=2$ the form $\theta_s=\theta_3$ becomes the scalar number $-\gamma^3_{1,2}$.
From the definition of $\theta_s$,  for each $r$ with $d_r>\kappa$ we observe that
\begin{align}\label{product}
\theta_s\wedge d\eta_r\wedge&\eta_{m_\kappa+1}\wedge\ldots\wedge\eta_{m_{d_r-1}}\\
=&\sum_{\substack{d_k=1,d_l=\kappa\\k<l}}(-1)^{h(m_\kappa,k,l)}\gamma^s_{k,l}c_{k,l}^r
\left(\bigwedge_{\substack{i=1\\i\neq k,l}}^{m_\kappa}\eta_i\right)\wedge\eta_l\wedge\eta_k
\wedge\eta_{m_\kappa+1}\wedge\ldots\wedge\eta_{m_{d_r-1}}\notag\\
=&\sum_{\substack{d_k=1,d_l=\kappa\\k<l}}\gamma^s_{k,l}c_{k,l}^r\eta_1\wedge\ldots\wedge\eta_{m_{d_r-1}}
=\delta_{r,s}\eta_1\wedge\ldots\wedge\eta_{m_{d_r-1}}\notag
\end{align}
by~\eqref{delta}.
In view of Theorem~\ref{th1}, we can differentiate \eqref{ref2}, obtaining
\begin{equation*}
0=f^*\left(d(\eta_{m_\kappa+1}\wedge\ldots\wedge\eta_q)\right)
=f^*\left(\sum_{r=m_\kappa+1}^{q}(-1)^{r-m_\kappa+1}d\eta_r\wedge(\eta_{m_\kappa+1}\wedge\ldots\wedge\widehat{\eta}_r\wedge\ldots\wedge\eta_q)\right)
\end{equation*}
almost everywhere in $\Omega$. Multiplication of this equation by $f^*\theta_s$ together with~\eqref{product} finally implies
\beqas
0&=&f^*\left(\sum_{r=m_\kappa+1}^{q}(-1)^{r-m_\kappa+1}\theta_s\wedge d\eta_r\wedge(\eta_{m_\kappa+1}\wedge\ldots\wedge\widehat{\eta}_r\wedge\ldots\wedge\eta_q)\right)\\
&=& (-1)^{s-m_\kappa+1}\,f^*\left(\eta_1\wedge\ldots\wedge\widehat{\eta}_s\wedge\ldots\wedge\eta_q\right)
\eeqas
almost everywhere in $\Omega$, where $d_s=\kappa+1$.

\textbf{Step 3.} ({\em Conclusion})

First, we assume that $\iota=2$. Our aim is to prove $df_y(\partial_1)\wedge\ldots\wedge df_y(\partial_{q-1})=0$ for almost every $y\in\Omega$. This equality is true for $y\in\Omega$, if $\theta(df_y(\partial_1)\wedge\ldots\wedge df_y(\partial_{q-1}))=0$ for each $\theta\in\Lambda_{q-1}\left(\bR_{f(y)}^q\right)^*$. Since
$$
\left\{(\eta_1\wedge\ldots\wedge\widehat{\eta}_s\wedge\ldots\wedge\eta_q)_{f(y)}\colon s=1,\ldots,q\right\}
$$
is the basis of $\Lambda_{q-1}\left(\bR_{f(y)}^q\right)^*$, it is enough to establish
\begin{equation}\label{goal1}
(\eta_{1}\wedge\ldots\wedge\widehat{\eta}_s\wedge\ldots\wedge\eta_{q})_{f(y)}\left(df_y(\partial_1)\wedge\ldots\wedge df_y(\partial_{q-1})\right)=0
\end{equation}
for each $s\in\{1,\ldots,q\}$ and almost every $y\in\Omega$. In the case $m_1=q-1$, all the vectors $df_y(\partial_j)$ are a.e.\ horizontal and this implies that \eqref{goal1} holds a.e.,\ for each $s$ with $d_s=1$. In the case $m_1<q-1$, by Lemma~\ref{algebra} the equality \eqref{goal1} holds a.e. for each $s$ with $d_s=1$. 
Furthermore, by step~1 applied to $\kappa=1$, in both of these cases 
\begin{equation}
f^*(\eta_{m_1+1}\wedge\ldots\wedge\eta_q)=0 \end{equation}
holds a.e., then step 2 implies that $f$ satisfies
$$
f^*(\eta_{1}\wedge\ldots\wedge\widehat{\eta}_s\wedge\ldots\wedge\eta_{q})=0
$$
almost everywhere in $\Omega$ for each $s$ with $d_s=2$. As a result, \eqref{goal1} holds for almost every $y\in\Omega$ and each $s=1,\ldots,q$, completing the proof for $\iota=2$.

If $\iota>2$, we necessarily have $m_1<q-1$. Then we argue by contradiction, assuming that $\rk df=q-1$ almost everywhere in $\Omega$. 
Since \eqref{horiz} holds a.e., by induction we can assume that
\[
(H_i)_{f(y)}\subset df_y(\bR^{q-1}_y)
\]
a.e.\ in $\Omega$, for each $i\leq\kappa$ and with $\kappa\in\{1,\ldots,\iota-1\}$.
The combination of step~1 and step~2 implies
$$
f^*(\eta_{1}\wedge\ldots\wedge\widehat{\eta}_s\wedge\ldots\wedge\eta_{q})=0
$$
a.e.\ in $\Omega$, for each $s$ with $d_s=\kappa+1$. This shows that \eqref{goal1} holds a.e.\ in $\Omega$, for each $s$ with $d_s=\kappa+1$. Our assumption on the rank of $df$ joined with Lemma~\ref{algebra} show that 
$X_s(f(y))\in df_y(\bR^{q-1}_y)$ for almost every $y\in\Omega$ and every $s$ such that $d_s=\kappa+1$. That is $(H_{\kappa+1})_{f(y)}\subset df_y(\bR^{q-1}_y)$ almost everywhere in $\Omega$, which gives the induction step.
As a consequence, we are lead to the inclusions $(H_i)_{f(y)}\subset df_y(\bR^{q-1}_y)$ that hold for almost every $y\in\Omega$ and each $i\in\{1,\ldots,\iota\}$, which is a contradiction. 
\end{proof}

\bibliography{bibtex}{}
\bibliographystyle{plain}

\end{document}